
\documentclass[12pt,reqno]{amsart}
\usepackage[comma,numbers]{natbib}
\usepackage[a4paper,left=1in,right=1in,top=1.25in,bottom=1.25in]{geometry}
\usepackage{color}
\usepackage{graphicx}

\newtheorem{theorem}{Theorem}[section]
\newtheorem{corollary}[theorem]{Corollary}
\newtheorem{lemma}[theorem]{Lemma}
\theoremstyle{definition}
\theoremstyle{remark}

\def\numberlikeadb{\global\def\theequation{\thesection.\arabic{equation}}}
\numberlikeadb

\newcommand{\eqa}{\begin{eqnarray}}
\newcommand{\ena}{\end{eqnarray}}
\newcommand{\eq}{\begin{equation}}
\newcommand{\en}{\end{equation}}
\newcommand{\eqs}{\begin{eqnarray*}}
\newcommand{\ens}{\end{eqnarray*}}

\def\Eq{\ =\ }
\def\Def{\ :=\ }
\def\Le{\ \le\ }
\def\pr{\mathbb{P}}
\def\ex{\mathbb{E}}
\def\ignore#1{}
\def\BB{\mathcal{B}}
\def\Z{\mathbb{Z}}
\def\e{\varepsilon}
\def\a{\alpha}
\def\tX{W}
\def\Blm{\left|}
\def\Brm{\right|}
\def\Bl{\left(}
\def\Br{\right)}
\def\Blb{\left\{}
\def\Brb{\right\}}
\def\s{\sigma}
\def\sjn{\sum_{j=1}^n}
\def\sn{\sum_{i=1}^n}
\def\bone{{\bf 1}}

\def\Ref#1{(\ref{#1})}

\def\ww{\mathcal{W}}
\def\bbb{{H}}
\def\bI{\mathbb{I}}

\def\re{\mathbb{R}}
\def\non{\nonumber}
\def\nin{\noindent}
\def\var{{\rm Var\,}}
\def\h{\eta}
\def\giv{\ |\ }
\def\ff{{\mathcal F}}
\def\xx{{\mathcal X}}

\def\um{^{(m)}}
\def\bone{{\bf 1}}

\def\bp{{\overline p}}
\def\bX{{\overline X}}
\def\bW{{\overline W}}
\def\ba{{\bar a}}

\def\th{\theta}
\def\ps{\psi}

\def\d{\delta}
\def\l{\lambda}

\def\g{\gamma}
\def\hps{{\hat\psi}}
\def\b{\beta}
\def\slo{{\sum_{l\ge0}}}
\def\t{\tau}
\def\atrate{\quad\mbox{at rate}\quad}
\def\Bi{{\rm Bi}\,}
\def\Br{\right)}

\begin{document}
\title[]{Connecting deterministic and stochastic metapopulation models}
\maketitle
\noindent A.D. BARBOUR, R. McVINISH and P.K. POLLETT  \footnote{ADB is supported in part by
Australian Research Council (Discovery Grants DP120102728 and DP120102398). PKP and RM are supported in part by the Australian
Research Council (Discovery Grant DP120102398 and the Centre of
Excellence for Frontiers in Mathematics and Statistics)}\\
Universit\"at Z\"urich and University of Queensland\\


\noindent ABSTRACT.  In this paper, we study the relationship between certain stochastic and deterministic versions of Hanski's incidence function model and the spatially realistic Levins model. We show that the stochastic version can be well approximated in a certain sense by the deterministic version when the number of habitat patches is large, provided that the presence or absence of individuals in a given patch is influenced by a large number of other patches. Explicit bounds on the deviation between the stochastic and deterministic models are given.


\noindent\emph{Key words}: Stochastic patch occupancy model (SPOM); Vapnik--Chervonenkis theory \\
\noindent\emph{MSC 2010}:  92D40; 60J10; 60J27

\section{Introduction}\label{intro}

Hanski's incidence function model \citep{Hanski:94} is perhaps the most widely used and studied metapopulation 
model in ecology. It is a discrete time Markov chain model, whose transition probabilities incorporate 
properties of the landscape to provide a realistic model of metapopulation dynamics. Numerous modifications, 
extensions and applications have been reported in the literature. In particular, we note \citet{AM:02}, who 
proposed a continuous time version. As these metapopulation models are finite state Markov chains, many 
quantities of interest can be calculated numerically, including the expected time to extinction and the 
quasi-stationary distribution. However, this does not aid our understanding of the model in general. 

Deterministic metapopulation models are often easier to analyse, allowing conditions for persistence to be determined fairly explicitly. For example, \citet{OH:01} made a detailed analysis of the spatially realistic Levins model \citep{HG:97}, providing, among other things, approximations of the equilibrium state and threshold conditions \citep[see also][]{OH:02}. However, these deterministic models expressed in terms of continuous quantities are only relevant insofar as they reflect properties of a related discrete stochastic model, and our primary interest here is in the extent to which this is true. Approximating Markov chains by deterministic processes is not a new idea, and results quantifying the approximation error have been obtained for a large class of models \citep[see][and references therein]{DN:08}; the stochastic metapopulation models that we are interested in do not fall into this class. 

In this paper, we show that, if the presence or absence of individuals in a given patch is evenly influenced by many other patches, the stochastic metapopulation models proposed in \citet{Hanski:94} and \citet{AM:02} are well approximated by the deterministic models in \citet{OH:01}. In Section~\ref{stoch-det}, we review these models, and describe how we measure the closeness of the deterministic model to the stochastic model. The parts of Vapnik--Chervonenkis theory needed for understanding this measure of closeness are briefly summarised. In Section~\ref{discrete}, we analyse the incidence function model, and establish two bounds on the difference between the outcomes of the deterministic and stochastic models.  Our first bound, given in Theorem~\ref{Thm1}, is simpler to derive than the second, Theorem~\ref{ADB-refined-Thm}, which is, however, usually asymptotically sharper; but neither bound in general dominates the other.  In Section~\ref{continuous}, we prove the corresponding bounds for the spatially realistic Levins model, in Theorem~\ref{ADB-Levins-Thm}. The proofs follow an approach used in \citet{BL:08}. We first construct a new metapopulation model where, conditional on the environmental 
variables, the patches are independent of each other. This independent patches metapopulation is well approximated by the deterministic model. We then couple the independent patches metapopulation to the original metapopulation and show that they remain close over finite time intervals. The paper concludes with some discussion. In particular, it is noted that the deterministic models are {\em not} shown to give good approximations to the analogous stochastic models, unless the presence or absence of individuals in a given patch is influenced by a large number of other patches, and that the approximation may otherwise be very poor. The example of recolonization only from immediately neighbouring patches in a metapopulation consisting of~$n$ patches arranged in line is enough to illustrate this.

\section{Stochastic and deterministic metapopulation models}\label{stoch-det}

\subsection{Incidence function model}\label{incidence-model}

The incidence function model of \citet{Hanski:94} for a metapopulation comprising $n $ patches is a 
discrete-time Markov chain on $\xx := \{0,1\}^{n} $. Denote this Markov chain by $ X_{t}= (X_{1,t},\ldots,X_{n,t}) $, where $ X_{i,t} = 1 $ if patch~$i$ is occupied at time $ t $ and $ X_{i,t} = 0 $ otherwise. In the generalization of the incidence function model considered here, patch $ i $ is described by two variables; its location $ z_{i} \in \re^d$ and a weight $ a_{i} > 0 $ which may be interpreted as the size of the patch. Other variables determining patch quality could be incorporated without changing the analysis. Writing $\ww :=\re^d \times \re_+$, let $ \sigma $ denote the set of vectors $ \{(z_{i},a_{i}) ,\, 1\le i\le n\} \subset \ww$; throughout, we let $\pr$ and~$\ex$ denote probability and expectation given~$\s$, and $ \mathbb{I}[\cdot] $ denote the indicator function taking the value 1 if the statement in $ [\cdot] $ is true and 0 otherwise. The transition probabilities of the Markov chain are determined by how well the patches are connected to each other and by the 
probability of local extinction.  Define the function $S_{i}: [0,1]^{n} \mapsto [0,\infty) $ by
\begin{equation}
   S_{i}(x) \Eq n^{-1}\sum_{j\neq i} x_{j}  a_{j} s_{ji},  \label{IFM:Connect}
\end{equation}
where $ s_{ji} = s_{ij} \ge 0$ for all $1\le i \ne j \le n$ and $s_{jj} := 0$, $1\le j\le n$; typically, for some $\a > 0$, 
\[
    s_{ji} \Def \exp(-\alpha \|z_{j} - z_{i}\|),\quad 1 \le j\neq i \le n.
\]
The connectivity measure of patch $ i $ at time $ t $ is given by $ S_{i}(X_{t}) $. Other forms such as those discussed in  \citet{Shaw:94} and \citet{MH:98} are also covered by our results. For bounded functions $ f_{C,i},f_{E,i} \colon [0,\infty) \to [0,\infty)$,  write $ C_{i}(x) = f_{C,i}(S_{i}(x))$ and $E_{i}(x) = f_{E,i}(S_{i}(x))$, $1\le i\le n$, $x \in [0,1]^{n}$. For any $m > 0$ such that $ m^{-1} \max\{C_i(x),E_i(x)\} \le 1$ for all $i$ and~$x$, define a Markov chain~$X\um $ such that, conditional on $ \left(X_{t}\um, \sigma\right)$, the $ X_{i,t+1}\um \ (i=1,\ldots,n) $ are independent with transition probabilities
\begin{equation}
   \pr\left(X_{i,t+1}\um=1 \ \middle| \ X_{t}\um\right)  \Eq 
       m^{-1} C_{i}(X_{t}\um)\left(1-X_{i,t}\um\right) +  (1-m ^{-1} E_{i}(X_t\um))X_{i,t}\um. \label{Eq1}
\end{equation}
If patch $ i $ is occupied at time $ t $, then that population survives to time $ t+1 $ with probability 
$ 1 - m ^{-1} E_{i}(X_t\um) $. Otherwise, it is colonised with probability $  m^{-1} C_{i}(X_{t}\um)$. 
This formulation of the colonisation and extinction probabilities is sufficiently flexible to cover many 
extensions of Hanski's incidence function model \citep{Hanski:94}, such as the inclusion of a rescue 
effect \citep{BKB:77,HMG:96}, the form of colonisation probabilities proposed by \citet{MN:02} and phase 
structure \citep{DP:95}.

For compatibility with the continuous time models that follow, the quantities $C_i(X)$ and~$E_i(X)$ should
be thought of as {\it rates\/} per unit time, and~$m^{-1}$ as a length of time, their product being
dimensionless.  There is considerable freedom of scaling available in choosing the functions 
$f_{C,i}$ and~$f_{E,i}$
and the elements making up the~$S_i(x)$.  Clearly, only the products $a_j s_{ji}$ are needed to
define~$S_i(x)$, so that the same results are obtained for $a_j^* := ca_j$ and $s_{ji}^* := c^{-1}s_{ji}$,
for any $c > 0$.  Similarly, if we had $S^*_i(x) := cS_i(x)$ for all $i$ and~$x$, we could choose
$f^*_{C,i}(s) := f_{C,i}(c^{-1}s)$ and~$f^*_{E,i}(s) := f_{E,i}(c^{-1}s)$.  The choice of the 
factor~$n^{-1}$ multiplying the sum in~\Ref{IFM:Connect} is made so that~$S_i(x)$ corresponds to an
average over~$n$ entries.  This is not a universal choice; for instance, the areas used by \citet{Hanski:94} 
correspond here to $n^{-1}a_i$, $1\le i\le n$.  Whatever scalings are used, it makes sense to
choose them such that the typical rate of change of state for an individual patch is neither very
small nor very large, as would presumably be to be expected in real situations.  The theorems that we
prove are, however, not sensitive to the particular choices made. The key requirement for keeping
the bounds small is that the overall number of changes of state expected per patch should be moderate.

\citet{OH:01} proposed a related deterministic model, analogous to~\Ref{Eq1} with $m=1$. Let $p_{i,t}$ be the probability that patch~$i$ is occupied at time~$t$ and let $ p_t = (p_{1,t},\ldots,p_{n,t}) $. As in the incidence function model, they model the change in $p_t$ by
\begin{equation}
   p_{i,t+1}-p_{i,t} \Eq C_{i}(p_t)(1-p_{i,t}) - E_{i}(p_t) p_{i,t}. \label{Eq2}
\end{equation}
They allow the probability of extinction at patch $ i $ to depend on the state of the whole metapopulation, in order to incorporate the rescue effect. We shall also consider the generalization of~\Ref{Eq2}, 
\begin{equation}
   p\um_{i,t+1}-p\um_{i,t} \Eq m^{-1}C_{i}(p\um_t)(1-p\um_{i,t}) - m^{-1}E_{i}(p\um_t) p\um_{i,t}, 
          \label{Eq2a}
\end{equation}
to mirror~\Ref{Eq1}.

\subsection{Spatially realistic Levins model}\label{Levins-model}

The spatially realistic Levins model \citep{HG:97} is the system of ordinary differential equations 
\begin{equation}
   \frac{dp_{i}(t)}{dt} \Eq C_{i}(p(t))(1-p_{i}(t)) - E_{i}(p(t)) p_{i}(t), \label{Eq3}
\end{equation}
for $p\colon [0,\infty) \to [0,1]^n$, where, as in model (\ref{Eq2}), $ C_{i}(p) = f_{C,i}(S_{i}(p)) $ and $ E_{i}(p) =  f_{E,i}(S_{i}(p)) $.
Although~$p(t)$ is meant to represent the probability that a patch in the metapopulation is occupied, the underlying stochastic model is unclear. 

We consider an appropriate stochastic version of model (\ref{Eq3}) to be the following generalization of the metapopulation model 
proposed by \citet[section 6.3]{AM:02}.  This model is a continuous time Markov chain $ X(t) = (X_{1}(t),\ldots,X_{n}(t)) $ on $ \xx$, where
\begin{eqnarray}
  \begin{array}{rcl}
  X \rightarrow X + \delta_{i}^{n} & \quad \mbox{at rate} \quad & C_{i}(X) (1-X_{i});  \\
  X \rightarrow X - \delta_{i}^{n} & \quad \mbox{at rate} \quad & E_{i}(X) X_{i}, 
  \end{array} \label{ADB-Eq4}
\end{eqnarray}
and $ \delta^{n}_{i}$  is the vector of length $ n $ with $ 1 $ at position $ i $ and zeros elsewhere. 

\subsection{Distance between models}\label{distance}

To discuss how well the deterministic models (\ref{Eq2}) and (\ref{Eq3}) approximate their corresponding stochastic models (\ref{Eq1}) and (\ref{ADB-Eq4}), we need a way to measure the closeness of the two models. For instance, we could consider comparing $ \ex X(t) $ from~\Ref{ADB-Eq4} with~$p(t)$ from~\Ref{Eq3}. However, we are typically interested in the behaviour of a given realisation of the metapopulation rather than its expectation. We thus prefer to compare the two metapopulations through the random measure valued processes $ (\bX(t),\,t\ge0)$ and $ (\bp(t),\,t\ge0) $ defined by 
\eq
   \begin{array}{rcl}
  \bX(t)\{B\} & := & n^{-1} \sum_{i=1}^{n} X_{i}(t)\, \mathbb{I}\left[(z_{i},a_{i}) \in B \right], \\
  \bp(t)\{B\} & := & n^{-1} \sum_{i=1}^{n} p_{i}(t)\, \mathbb{I}\left[(z_{i},a_{i}) \in B \right],
   \end{array} \label{ADB-measure-defs}
\en
for measurable sets $ B \subset \ww $. We say that the two models are close for $0\le t\le T$ if, for a suitable collection of measurable sets $ \mathcal{B} $,
\begin{equation}
  \sup_{0\le t\le T}\sup_{B \in \mathcal{B}}\left| \bX(t)\{B\} - \bp(t)\{B\} \right|  \label{BA:Eq1}
\end{equation}
is small with high probability. If (\ref{BA:Eq1}) is small, then the deterministic model provides a good approximation to the proportion of occupied patches in $ B $ relative to the entire metapopulation, for all $ B \in \mathcal{B} $. If we let $ \mathcal{B} $ be the Borel sets, then 
$$
\sup_{B \in \mathcal{B}}\left| \bX(t)\{B\} - \bp(t)\{B\} \right|
$$
is the total variation distance, and is given by
\begin{equation}
   \max \left( n^{-1} \sum_{i: X_{i}(t) = 1} \left(1 - p_{i}(t)\right),\ n^{-1} 
              \sum_{i: X_{i}(t) = 0} p_i(t) \right). \label{BA:Eq2}
\end{equation}
Although  $ \bX(t) $ and $ \bp(t) $  may not be close in total variation, it may still be possible for (\ref{BA:Eq1}) to be small, if we restrict the class of sets $ \mathcal{B} $. Specifically, we shall restrict the class of sets to those with finite Vapnik--Chervonenkis dimension.

\subsection{A brief summary of Vapnik--Chervonenkis theory}\label{VC-theory}

Vapnik--Chervonenkis theory concerns the uniform convergence of empirical measures over certain classes of sets. A central concept in Vapnik--Chervonenkis theory, and the part of the theory that  we will need in the following, is that of Vapnik--Chervonenkis (VC) dimension. 

The VC dimension is a measure of the size of a class of sets. Let $ \mathcal{B} $ be a class of sets in $ \mathbb{R}^{d} $.  To determine the VC dimension of $ \mathcal{B} $, we first need its  shatter coefficients which are defined by
$$
  S_{\mathcal{B}}(n) \Def \max_{x_{1},\ldots,x_{n} \in \mathbb{R}^{d}} \left| \left\{\{x_{1},\ldots,x_{n}\} \cap B; 
     B \in \mathcal{B} \right\} \right|,
$$
for $ n = 1,2,\ldots $ The shatter coefficient $ S_{\mathcal{B}}(n) $ is the maximal number of different subsets that can be formed  by intersecting a set of $ n $ points with elements of $ \mathcal{B} $. The VC dimension of a class of sets $ \mathcal{B} $ is the largest integer $ n $ such that  $ S_{\mathcal{B}}(n) = 2^{n} $. A corollary to a result of \citet{Sauer:72} shows that, for a class $ \mathcal{B} $ with VC dimension $ V $, the shatter coefficients can be bounded by $ S_{\mathcal{B}}(n) \leq (n+1)^{V} $ \citep[see][Corollary 4.1]{DL:01}. Examples of classes with finite VC dimension include the class of all rectangles in $ \mathbb{R}^{d} $ $ (V= 2d) $ and the class of closed balls in $ \mathbb{R}^{d} $ $(V = d + 1)$ \citep{Dudley:79}.

By restricting attention to the proportion of patches occupied within
each of the subsets of~$\ww$ that belong to a class of finite VC dimension, we are able to justify accurate approximation
of all the proportions simultaneously, whatever the underlying landscape.  Since, as illustrated
above, such classes of sets are very large, this should not be considered to be a major limitation of
the analysis. For instance, if the proportion of patches occupied in {\it every\/} rectangle in~$\ww$
is well approximated by its deterministic prediction, this constitutes a strong practical
justification for judging the deterministic approximation to be a good one.
Even when comparing the empirical measure from a sample of independent and identically distributed random variables to the true underlying probability measure, such a restriction is necessary \citep[Theorem 4]{VC:71}.

\section{Comparisons in discrete time}\label{discrete}

\subsection{Independent patches approximation}\label{IPA}

For a fixed $m\ge1$, define the process $ W_{t}\um = (W_{1,t}\um,\ldots, W_{n,t}\um) $ where, conditional on the environmental variables $  \sigma $, the $ W_{i,t}\um $ are independent Markov chains given by 
\begin{equation}
  \pr\left(W_{i,t+1}\um = 1 \ \middle|\ W_{i,t}\um \right) 
     \Eq m^{-1}C_{i}(p\um_t)(1-W_{i,t}\um) +  (1-m^{-1}E_{i}(p\um_t)) W_{i,t}\um, \label{IPA:Eq1}
\end{equation}
and~$p\um$ satisfies~\Ref{Eq2a} with $ p\um_{i,0} := \pr(W_{i,0}\um = 1) $.  Note that
\eq\label{ADB-tilde-X-mean}
      \ex ( W_{i,t}\um) \Eq  p\um_{i,t} \quad\mbox{for all}\ t. 
\en
Write
\begin{eqnarray*}
   \bW_t\um\{B\} & := & n^{-1} \sum_{i=1}^{n} 
             W_{i,t}\um \mathbb{I}\left[(z_{i},a_{i}) \in B \right];  \label{IPA:Eq1b} \\
   \bp_t\um\{B\} & := & n^{-1} \sum_{i=1}^{n} 
             p_{i,t}\um \mathbb{I}\left[(z_{i},a_{i}) \in B \right],    \label{IPA:Eq1c}
\end{eqnarray*}
for any measurable set $ B \subset \ww $. For the rest of this section, we suppress the superscript~$(m)$.


We begin by showing that $ \bW_t $ is well approximated by $ \bp_t $. For a measure~$\nu$ and function~$f$, define $\nu(f) := \int f\,d\nu$. The basic result concerns linear combinations of the form $ \bW_{t}(g) = \sn g_{in}\tX_{i,t}$, where $g_{in} := n^{-1}g(z_i,a_i)$ for $g\colon \ww \to \re$.

\begin{lemma} \label{ADB-IPA:Lem0}
For any $ \epsilon > 0 $,
\[
   \pr\Blb \Blm \bW_{t}(g) - \bp_{t}(g) \Brm > \e \Brb \Le 2\exp\{-2n\e^2/G_n^2\},
\]
where $G_n^2 := n\sn g_{in}^2 = n^{-1}\sn \{g(z_i,a_i)\}^2$.
\end{lemma}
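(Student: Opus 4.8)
The plan is to recognise $\bW_t(g) - \bp_t(g)$ as a sum of independent, bounded, mean-zero random variables and to apply Hoeffding's inequality, everything being understood conditional on~$\s$. Writing out the definitions,
\[
  \bW_t(g) - \bp_t(g) \Eq \sn g_{in}\Bl W_{i,t} - p_{i,t}\Br,
\]
so it suffices to control the right-hand side.

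First I would observe that, conditional on~$\s$, the quantities $p_{i,t}$ solving~\Ref{Eq2a} are deterministic, so the transition probability in~\Ref{IPA:Eq1} driving each $W_{i,t}$ involves only deterministic coefficients. Consequently the chains $(W_{i,t})_{t\ge0}$, $1\le i\le n$, are (conditionally on~$\s$) genuinely independent of one another, and in particular $W_{1,t},\ldots,W_{n,t}$ are independent at each fixed~$t$, each being Bernoulli with $\ex(W_{i,t}) = p_{i,t}$ by~\Ref{ADB-tilde-X-mean}. Hence the summands $Y_i := g_{in}(W_{i,t}-p_{i,t})$ are independent and have zero mean under~$\pr$. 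Next I would bound their ranges: since $W_{i,t}\in\{0,1\}$ and $p_{i,t}\in[0,1]$, the difference $W_{i,t}-p_{i,t}$ lies in $[-p_{i,t},\,1-p_{i,t}]$, an interval of length exactly~$1$, so each $Y_i$ takes values in an interval of length $|g_{in}|$. Hoeffding's inequality for independent, bounded, mean-zero variables then gives
\[
  \pr\Blb \Blm \sn Y_i \Brm > \e \Brb \Le 2\exp\Blb -\frac{2\e^2}{\sn g_{in}^2}\Brb .
\]
Since $G_n^2 = n\sn g_{in}^2$, we have $\sn g_{in}^2 = G_n^2/n$, and substituting yields the stated bound $2\exp\{-2n\e^2/G_n^2\}$.

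The argument is essentially a single invocation of Hoeffding's inequality, so the only point requiring care is the justification that, conditional on~$\s$, the variables $W_{1,t},\ldots,W_{n,t}$ are truly independent: this rests on the observation that the probabilities driving~\Ref{IPA:Eq1} are functions of the deterministic path $p\um$ alone, and therefore do not couple distinct patches to one another. Once this conditional independence is in hand, the boundedness of the increments together with the identity $\ex(W_{i,t})=p_{i,t}$ make the remaining steps routine.
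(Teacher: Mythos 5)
Your proof is correct and follows essentially the same route as the paper: the paper likewise sets $Y_i := g_{in}(W_{i,t}-p_{i,t})$, notes these are independent with $-g_{in}p_{i,t}\le Y_i\le g_{in}(1-p_{i,t})$, and invokes Hoeffding's inequality (cited there as Theorem 2.5 of McDiarmid). Your additional remarks on conditional independence given $\s$ and the zero-mean property just make explicit what the paper leaves implicit.
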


\begin{proof}
The random variables $Y_i := g_{in}(\tX_{it} - p_{i,t})$, $1\le i\le n$, are independent, and $-g_{in} p_{i,t} \le Y_i \le g_{in}(1 - p_{i,t})$.  The lemma now follows from \citet[Theorem 2.5]{McDiarmid:98}.
\end{proof}

Applying the lemma with $g(w) := \mathbb{I}[w\in B]$, $w\in\ww$, for any $B \in \BB$ gives the following bound for classes~$\BB$ of sets.

\begin{corollary} \label{IPA:Lem1}
For any $ \epsilon > 0 $,
\begin{eqnarray*}
   \pr\left\{\sup_{B \in \mathcal{B}} \left| \bW_t\{B\} - \bp_t\{B\}  \right| 
           > \epsilon \right\} \Le 2 S_{\mathcal{B}}(n) \exp(-2n\epsilon^{2}).
\end{eqnarray*}
\end{corollary}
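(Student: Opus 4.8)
The plan is to deduce the corollary directly from Lemma~\ref{ADB-IPA:Lem0} by a union bound, after first using the finiteness of the shatter coefficient to replace the supremum over the (possibly infinite) class~$\BB$ by a maximum over finitely many events. First I would specialise Lemma~\ref{ADB-IPA:Lem0} to indicator functions. Fixing $B \in \BB$ and taking $g(w) := \mathbb{I}[w\in B]$, one has $\bW_t(g) = \bW_t\{B\}$ and $\bp_t(g) = \bp_t\{B\}$, while, since the indicator equals its own square,
\[
   G_n^2 \Eq n^{-1}\sn \mathbb{I}[(z_i,a_i)\in B] \Le 1,
\]
because at most~$n$ of the~$n$ points can lie in~$B$. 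Hence for each single~$B$ the lemma gives $\pr\{|\bW_t\{B\} - \bp_t\{B\}| > \e\} \le 2\exp\{-2n\e^2/G_n^2\} \le 2\exp\{-2n\e^2\}$.

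The crux is to control the supremum over all $B \in \BB$ simultaneously, and this is where Vapnik--Chervonenkis theory enters. The key observation is that both $\bW_t\{B\}$ and $\bp_t\{B\}$ depend on~$B$ only through the subset of indices $\{i : (z_i,a_i)\in B\}$, equivalently through the trace $B \cap \{(z_1,a_1),\ldots,(z_n,a_n)\}$. Two sets in~$\BB$ with the same trace on the point configuration therefore yield identical values of the deviation $|\bW_t\{B\} - \bp_t\{B\}|$. Consequently, although~$\BB$ may be infinite, the quantity $B \mapsto |\bW_t\{B\} - \bp_t\{B\}|$ takes at most as many distinct values as there are distinct traces of elements of~$\BB$ on these~$n$ points, and by the very definition of the shatter coefficient that number is at most $S_{\BB}(n)$. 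Selecting one representative set for each distinct trace, the event $\{\sup_{B\in\BB}|\bW_t\{B\} - \bp_t\{B\}| > \e\}$ coincides with a union of at most $S_{\BB}(n)$ events, each of the form already bounded above.

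Finally I would apply Boole's inequality to this union of at most $S_{\BB}(n)$ events, each of probability at most $2\exp\{-2n\e^2\}$, giving the claimed bound $2\,S_{\BB}(n)\exp(-2n\e^2)$. I do not anticipate a genuine obstacle: the entire substance lies in the VC reduction of the supremum to a finite maximum, after which the estimate is immediate. The only points demanding slight care are the elementary bound $G_n^2\le 1$ and the measurability/representative-selection step implicit in replacing the supremum over~$\BB$ by a maximum over the finitely many distinct traces.
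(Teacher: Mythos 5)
Your proposal is correct and follows essentially the same route as the paper: the paper likewise reduces the supremum over $\mathcal{B}$ to a maximum over a finite subcollection $\hat{\mathcal{B}}$ containing one representative per distinct trace on $\{(z_1,a_1),\ldots,(z_n,a_n)\}$ (so $|\hat{\mathcal{B}}| \le S_{\mathcal{B}}(n)$), applies Boole's inequality, and invokes Lemma~\ref{ADB-IPA:Lem0} with indicator functions, noting $G_n^2 \le 1$. The only difference is the order of presentation, which is immaterial.
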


\begin{proof}
For any $ B $, let $ \xi_{t}\{B\} = \bW_t\{B\} - \bp_t\{B\}  $. Let $ \hat{\mathcal{B}} \subset \mathcal{B} $ denote a collection of sets such that any two sets in $ \hat{\mathcal{B}} $ have different intersections with the set 
$$ 
    \{(z_{1},a_{1}),\ldots,(z_{n},a_{n}) \}, 
$$ 
and every intersection is represented once. Then
\begin{eqnarray*}
  \pr\left\{\sup_{B \in \mathcal{B}} \left| \xi_{t}\{B\}  \right| > \epsilon  \right\}
     \Eq \pr\left\{\max_{B \in \hat{\mathcal{B}}} \left| \xi_{t}\{B\}  \right| > \epsilon  \right\}
     \Le \sum_{B \in \hat{\mathcal{B}}} \pr\left\{\left| \xi_{t}\{B\}  \right| > \epsilon  \right\} .
\end{eqnarray*}
But the final probability is of the form given in Lemma~\ref{ADB-IPA:Lem0}, with $g_{in} \in n^{-1}\{0,1\}$, giving $G_n^2 \le 1$, and hence
$$
  \pr\left\{\left| \xi_{t}\{B\}  \right| > \epsilon  \right\}  \Le 2 \exp(-2n\epsilon^{2}).
$$
To complete the proof, we simply note that $ \left| \hat{\mathcal{B}}\right| \leq S_{\mathcal{B}}(n) $.
\end{proof}

When $ \mathcal{B} $ has VC dimension $ V < \infty $, Corollary~\ref{IPA:Lem1} together with Sauer's (1972) bound $ S_{\mathcal{B}}(n) \leq (n+1)^{V} $ yields 
$$
      \pr\left\{\sup_{B \in \mathcal{B}} \left| \bW_t\{B\} - \bp_t\{B\}  \right| 
         > \left(\frac{C \log n}{n}\right)^{1/2}  \right\} \leq 2^{V+1} n^{V-2C}, 
            \label{IPA:Eq3}
$$
for any $ C>0 $. 

The following further consequence of Lemma~\ref{ADB-IPA:Lem0} is useful in the next section. We write
\eq\label{ADB-H-defs}
   \bbb_{in}^2 \Def  n^{-1}\sjn \{a_js_{ji}\}^2. 
\en

\begin{corollary} \label{ADB-IPA:Cor2}
Taking $g^{(i)}\colon \ww\to\re$ to be such that $g^{(i)}_{jn} := n^{-1}a_js_{ji}$, for any $1\le i\le n$, we have
\[
   \pr\Blb \Bigl| S_{i}(\tX_{t}) - S_i(p_t) \Bigr| > \e \Brb \Le 2\exp\{-2n\e^2/\bbb_{in}^2\}.
\]
\end{corollary}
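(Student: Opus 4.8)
The plan is to recognize that Corollary~\ref{ADB-IPA:Cor2} is an immediate application of Lemma~\ref{ADB-IPA:Lem0} with a specific choice of the function~$g$. Looking at the definition $S_i(x) = n^{-1}\sum_{j\neq i} x_j a_j s_{ji}$ in~\Ref{IFM:Connect}, I observe that $S_i$ is precisely a linear combination of the patch-occupancy indicators with coefficients $n^{-1}a_j s_{ji}$. Since $g^{(i)}_{jn} := n^{-1}a_j s_{ji}$, this means $S_i(\tX_t) = \sum_{j=1}^n g^{(i)}_{jn}\tX_{j,t} = \bW_t(g^{(i)})$ and, analogously, $S_i(p_t) = \bp_t(g^{(i)})$. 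The convention $s_{jj}=0$ ensures the $j=i$ term vanishes, so the sum over $j\neq i$ and the sum over all $j$ agree.

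First I would make this identification explicit: apply Lemma~\ref{ADB-IPA:Lem0} with $g = g^{(i)}$, so that $\bW_t(g^{(i)}) - \bp_t(g^{(i)}) = S_i(\tX_t) - S_i(p_t)$. Next I would compute the relevant constant $G_n^2$ from the lemma for this choice of~$g$. By definition, $G_n^2 = n\sum_{j=1}^n (g^{(i)}_{jn})^2 = n\sum_{j=1}^n (n^{-1}a_js_{ji})^2 = n^{-1}\sum_{j=1}^n \{a_js_{ji}\}^2$, which is exactly $\bbb_{in}^2$ as defined in~\Ref{ADB-H-defs}. Substituting $G_n^2 = \bbb_{in}^2$ into the conclusion of Lemma~\ref{ADB-IPA:Lem0} yields the stated bound $2\exp\{-2n\e^2/\bbb_{in}^2\}$ directly.

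There is essentially no obstacle here, since everything reduces to a bookkeeping check that the coefficients and the variance proxy match up; the only point requiring a moment's care is confirming that the $j=i$ term poses no difficulty, which is handled by $s_{ii}=0$ so that $g^{(i)}_{in}=0$. Thus the entire proof amounts to the single sentence: taking $g=g^{(i)}$ in Lemma~\ref{ADB-IPA:Lem0} gives $S_i(\tX_t)-S_i(p_t) = \bW_t(g^{(i)})-\bp_t(g^{(i)})$ and $G_n^2 = \bbb_{in}^2$, from which the claim follows.
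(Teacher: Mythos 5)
Your proof is correct and is essentially the paper's own argument: the paper states Corollary~\ref{ADB-IPA:Cor2} as a direct consequence of Lemma~\ref{ADB-IPA:Lem0}, obtained exactly as you do by taking $g = g^{(i)}$, identifying $\bW_t(g^{(i)}) - \bp_t(g^{(i)}) = S_i(\tX_t) - S_i(p_t)$, and computing $G_n^2 = n\sjn (n^{-1}a_js_{ji})^2 = \bbb_{in}^2$. Your observation that $s_{ii}=0$ makes $g^{(i)}_{in}=0$, so the sum over $j\neq i$ in the definition of $S_i$ agrees with the sum over all $j$, is the right bookkeeping detail to check.
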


Defining 
\eq\label{ADB-eps-def}
  \e_n(r) \Def n^{-1/2}\sqrt{r\log n}, 
\en
and letting 
\eq\label{ADB-F_n-def}
   F(r,T) \Def \Blb \max_{1\le i\le n}
     \max_{1\le t\le mT} \bbb_{in}^{-1}\Bigl| S_{i}(\tX_{t}) - S_i(p_t) \Bigr| \le \e_n(r) \Brb ,
\en
Corollary~\ref{ADB-IPA:Cor2} implies that, for any $T > 0$ such that $mT$ is an integer,
\eq\label{ADB-S-bnd}
    \pr(F^c(r,T)) \Le 2mTn^{-2r+1},
\en
where $ F^{c} $ is the complement of $ F $.

\subsection{Coupled metapopulation models}\label{coupled-models}

We now couple the independent patches meta\-population model~$W\um$ to the original metapopulation model~$X\um$, thus showing that the models defined in \Ref{Eq1} and~\Ref{Eq2a} indeed generate measure valued processes $(\bX_t\um,\,t \in \Z_+)$ and~$(\bp_t\um,\,t\in\Z_+)$ that are close over intervals of length~$mT$, uniformly in~$m$. Once again, we suppress the superscript~$(m)$ throughout the section. Let $ U_{i,t},\ i=1,\ldots,n,\ t=1,2,\ldots $ be an array of independent uniformly distributed random variables on $ [0,1] $. The incidence function model (\ref{Eq1}) and the independent patches model (\ref{IPA:Eq1}) can be realized together by
starting with $ X_{i,0} = W_{i,0}$, $1\le i\le n$, and then, for $t\ge0$, sequentially defining
\begin{equation}
    X_{i,t+1} \Eq (1-X_{i,t})\mathbb{I}(U_{i,t} \le m^{-1}C_{i}(X_{t})) 
              + X_{i,t} \mathbb{I}(U_{i,t} \leq 1-m^{-1}E_{i}(X_t)), \label{CM:Eq1}
\end{equation}
and
\begin{equation}
  W_{i,t+1} \Eq (1-W_{i,t})\mathbb{I}(U_{i,t} \leq m^{-1}C_{i}(p_t))
        + W_{i,t} \mathbb{I}(U_{i,t} \leq 1-m^{-1}E_{i}(p_t)), \label{CM:Eq2}
\end{equation}
for $1\le i\le n$.  Using this construction, we can subtract (\ref{CM:Eq2}) from (\ref{CM:Eq1}) to give
\eqa
   J_{i,t+1}  &\le&  J_{i,t} 
           + \left|\bI(U_{i,t} \le m^{-1}C_{i}(X_{t})) - \bI(U_{i,t} \leq m^{-1}C_{i}(p_t)) \right|
             \bI(X_{i,t} = 0) \non\\
  &&\qquad\mbox{} + \left|\bI(U_{i,t} \le m^{-1}E_{i}(X_{t})) - \bI(U_{i,t} \leq m^{-1}E_{i}(p_t)) \right|
             \bI(X_{i,t} = 1).   \label{ADB-X-diff-increment}
\ena
where 
\eq\label{ADB-J-def}
   J_{i,t} \Def \max_{1\le s\le t} \mathbb{I}(X_{i,s} \neq \tX_{i,s}).
\en
Thus, if the differences $m^{-1}|C_{i}(X_{t}) - C_{i}(p_t)|$ and $m^{-1}|E_{i}(X_{t}) - E_{i}(p_t)|$, $1\le i\le n$, are small for each~$t$ in some interval, it suggests that not too many components of $X$ and~$\tX$ will differ there.  The next lemma makes use of this idea; to state it, we introduce some further notation. 
We suppose that the functions $ f_{C,i} $ and~$f_{E,i}$ are Lipschitz continuous with Lipschitz constants 
$ L_i(C) $ and~$L_i(E)$, and we write 
\eq\label{ADB-A-defs}
  \begin{array}{ll}
     \ba \Def n^{-1}\sn a_i; &\qquad L_i \Def  L_i(C) +  L_i(E);\\
     A \Def n^{-1}\max_{1\le i\le n} \sjn a_j L_j s_{ji}; &\qquad H \Def n^{-1}\sn a_i L_i \bbb_{in}  ,  
  \end{array}
\en 
where $\bbb_{in}$ is as defined in~\Ref{ADB-H-defs}.

\begin{lemma} \label{ADB-CM:Lem1}
Assume that the $ f_{C,i} $ and~$f_{E,i}$ are Lipschitz continuous with Lipschitz constants  $ L_i(C) $ and~$L_i(E)$. Then, with the notation of \Ref{ADB-H-defs} and~\Ref{ADB-A-defs}, we have
$$
   \ex \left( \sn a_i J_{i,mt}\right) \Le  n^{1/2}(H/A)\exp\{At\}.
$$
\end{lemma}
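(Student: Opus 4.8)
The plan is to derive a discrete Grönwall-type recursion for the weighted discrepancy $\phi_t := \ex\bigl(\sn a_i J_{i,t}\bigr)$, starting from the increment bound~\Ref{ADB-X-diff-increment}. Let $\ff_t$ be the filtration generated by~$\s$ together with the variables $U_{i,s}$, $s<t$, so that $X_t$ and the deterministic vector~$p_t$ are $\ff_t$-measurable while $U_{i,t}$ is uniform on $[0,1]$ and independent of~$\ff_t$. Since $m^{-1}\max\{C_i,E_i\}\le1$, the conditional expectation of each absolute difference of indicators in~\Ref{ADB-X-diff-increment} is exactly $m^{-1}|C_i(X_t)-C_i(p_t)|$ (respectively with~$E$). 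Applying the Lipschitz bounds $|C_i(X_t)-C_i(p_t)|\le L_i(C)|S_i(X_t)-S_i(p_t)|$ and the analogue for~$E$, and noting that $\bI(X_{i,t}=0)$ and $\bI(X_{i,t}=1)$ are mutually exclusive, I would obtain
\[
  \ex\bigl(J_{i,t+1}-J_{i,t}\giv\ff_t\bigr) \Le m^{-1}L_i\,\bigl|S_i(X_t)-S_i(p_t)\bigr|.
\]

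The key step is to split the deviation into a part fed back into the coupling discrepancies and a genuine fluctuation of the independent-patches model: $S_i(X_t)-S_i(p_t) = \{S_i(X_t)-S_i(W_t)\}+\{S_i(W_t)-S_i(p_t)\}$. For the first bracket, $|X_{j,t}-W_{j,t}|=\bI(X_{j,t}\ne W_{j,t})\le J_{j,t}$ gives $|S_i(X_t)-S_i(W_t)|\le n^{-1}\sjn a_j s_{ji}J_{j,t}$. For the second, the $W_{j,t}$ are, conditional on~$\s$, independent with mean~$p_{j,t}$ and variance at most~$\half$'s square (i.e.\ $\le\tfrac14$), so by Jensen's inequality $\ex|S_i(W_t)-S_i(p_t)|\le n^{-1}(\sjn a_j^2 s_{ji}^2)^{1/2}/2 = n^{-1/2}\bbb_{in}/2$ (this is the expectation analogue of Corollary~\ref{ADB-IPA:Cor2}). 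Multiplying by $a_iL_i$, summing over~$i$ and taking expectations yields
\[
  \phi_{t+1}-\phi_t \Le m^{-1}\Bigl\{ n^{-1}\sn a_iL_i\sjn a_j s_{ji}\,\ex J_{j,t} + \half\, n^{-1/2}\sn a_iL_i\bbb_{in}\Bigr\}.
\]

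To identify the constants, I would exchange the order of summation in the first term and use the symmetry $s_{ji}=s_{ij}$: for each fixed~$j$, $n^{-1}\sn a_iL_i s_{ji}=n^{-1}\sn a_iL_i s_{ij}\le A$ by the definition~\Ref{ADB-A-defs}, so the first term is at most $A\phi_t$; the second term equals $\half\,n^{1/2}H$ since $\sn a_iL_i\bbb_{in}=nH$. Hence $\phi_{t+1}\le(1+m^{-1}A)\phi_t + \half\,m^{-1}n^{1/2}H$, with $\phi_0=0$ because $X_{i,0}=W_{i,0}$. Iterating this linear recursion and summing the geometric series gives $\phi_T\le \half\,n^{1/2}(H/A)\bigl((1+m^{-1}A)^{T}-1\bigr)$; taking $T=mt$ and using $(1+m^{-1}A)^{mt}\le e^{At}$ yields the claimed bound $n^{1/2}(H/A)\exp\{At\}$ (indeed with a spare factor~$\half$).

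The main obstacle is the self-referential structure of the argument: because each $S_i$ averages over all patches, the expected increment of $\sn a_iJ_{i,t}$ feeds back into the very same quantity through the connectivity matrix, and everything hinges on showing that this feedback is governed by the single constant~$A$. The two delicate points are getting the conditioning right so that the independence of $U_{i,t}$ produces exactly the factor $m^{-1}|C_i(X_t)-C_i(p_t)|$, and the symmetry bookkeeping that collapses the double sum $n^{-1}\sn a_iL_i s_{ji}$ to~$A$; the remaining fluctuation term is a routine second-moment estimate for the independent patches.
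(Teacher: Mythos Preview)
Your argument is correct and follows essentially the same route as the paper's proof: Lipschitz bound, split $S_i(X_t)-S_i(p_t)$ through $S_i(W_t)$, feed the coupling discrepancies back via the connectivity matrix, bound the fluctuation term by a second-moment estimate, and close with a Grönwall-type recursion for $\phi_t=\sum_i a_i\,\ex J_{i,t}$. Two small remarks: you should write ``multiplying by $a_i$'' rather than ``multiplying by $a_iL_i$'' (the $L_i$ is already present from the Lipschitz step, and your displayed recursion is consistent with weighting by $a_i$); and your explicit use of the symmetry $s_{ji}=s_{ij}$ to collapse $n^{-1}\sum_i a_iL_i s_{ji}\le A$ is exactly what the paper does implicitly, while your retention of the factor $\half$ from $p(1-p)\le\tfrac14$ gives a marginally sharper constant than the paper's stated bound.
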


\begin{proof}
Under the assumptions of the lemma, 
\eq\label{ADB-C-diff}
  m^{-1}|C_{i}(X_{t}) - C_{i}(p_t)| \Le m^{-1} L_i(C)\Blb |S_i(X_{t}) - S_i(\tX_{t})|
    + |S_i(\tX_{t}) - S_i(p_t)| \Brb,
\en
and
\eq\label{ADB-E-diff}
  m^{-1}|E_{i}(X_{t}) - E_{i}(p_t)| \Le m^{-1} L_i(E)\Blb |S_i(X_{t}) - S_i(\tX_{t})|
    + |S_i(\tX_{t}) - S_i(p_t)| \Brb.
\en
Now
\eq\label{ADB-S-diff-1}
    |S_i(X_{t}) - S_i(\tX_{t})| \Le n^{-1}\sjn a_j s_{ji} |X_{j,t} - \tX_{j,t}|
                                      \Le n^{-1}\sjn a_j s_{ji} J_{j,t},
\en
and, as the $ W_{i,t} $ are independent Bernoulli random variables, it follows from \Ref{ADB-tilde-X-mean} that $\ex\{S_i(\tX_{t}) - S_i(p_t)\} = 0$ and
\eq\label{ADB-S-diff-2}
     \var\{S_i(\tX_{t}) - S_i(p_t)\}
    \Eq n^{-2} \sjn a^{2}_{j} s_{ji}^2 p_{j}(t) (1-p_{j}(t)) \Le n^{-1}\bbb_{in}^2.
\en
From Jensen's inequality, $ \mathbb{E} \left| S_i(\tX_{t}) - S_i(p_t)\right| \leq n^{-1/2} \bbb_{in} $. Hence, writing $x_{i,t} := \ex J_{i,t}$, it follows from \Ref{ADB-X-diff-increment} and \Ref{ADB-C-diff}--\Ref{ADB-S-diff-2} that 
\eqa
    x_{i,t+1} &\le& x_{i,t} + m^{-1}L_i\Blb n^{-1}\sjn a_j s_{ji} x_{j,t} 
            + n^{-1/2}\bbb_{in} \Brb .
              \label{ADB-mean-recursion}
\ena
This in turn implies that
\eq
    \sjn a_j x_{j,t+1} \Le (1+m^{-1}A) \sjn a_j x_{j,t} + m^{-1}n^{1/2} H. \label{ADB-mean-recusion-2}
\en
By construction $ X_{i,0} = W_{i,0} $ so $ x_{i,0} = 0 $ for all $ i $. Iterating (\ref{ADB-mean-recusion-2}) gives
\[
    \sn a_i x_{i,t} \Le m^{-1}n^{1/2} H \sum_{k=0}^{t-1} (1+m^{-1}A)^{k} \Le (H/A) n^{1/2} \exp\{At/m\},
\]
proving the lemma. 
\end{proof}

Now define 
\eq\label{ADB-I-psi-def}
   I(\th) \Def \{i\colon\, a_i < \th\ba\};\qquad \ps(\th) \Def n^{-1}|I(\th)|,
\en
so that $a_i/(\th\ba) \ge 1$ for $i \notin I(\th)$. Then it follows immediately from Lemma~\ref{ADB-CM:Lem1} that, for any class of sets~$\BB$, and for any~$t \le mT$,
\eqs
     \sup_{B \in \mathcal{B}} \left|\bX_t\{B\} - \bW_t\{B\} \right| 
        &\le&  n^{-1} \sum_{i=1}^{n} \left| X_{i,t} - W_{i,t} \right| \non \\
     &\le&  (n\th\ba)^{-1} \sum_{i=1}^{n} a_i J_{i,mT}  + \ps(\th). \label{ADB-set-bnd}
\ens
Combining this bound with Markov's inequality yields, for any $y > 0$,
\eqa
  \lefteqn{\pr \left(\max_{1 \le t \le mT}\sup_{B \in \mathcal{B}} 
             \left|\bX_t\{B\} - \bW_t\{B\} \right| > \ps(\th) + y\right) } \non\\
  &&\Le \pr\Bl (n\th\ba)^{-1} \sum_{i=1}^{n} a_i J_{i,mT} > y \Br \label{ADB-mean-diff-1}\\
  &&\Le  \frac1{yn\th\ba}\, \ex\Blb \sum_{i=1}^{n} a_i  J_{i,mT} \Brb 
             \Le  \frac{H}{yA\ba\th}\, n^{-1/2}e^{AT}  . \label{ADB-mean-diff}
\ena
This has immediate consequences for uniform approximation over VC classes~$\BB$ of sets. Combining 
Corollary~\ref{IPA:Lem1} and~\Ref{ADB-mean-diff}, with $y = n^{-1/2+\h}\bbb e^{At}/(A\ba\th)$, we 
obtain the following result.

\begin{theorem} \label{Thm1}
Assume that $ f_{C,i} $ and~$f_{E,i}$ are Lipschitz continuous with Lipschitz constants $ L_i(C)$ 
and~$L_i(E)$. If $ \mathcal{B} $ has VC dimension $ V < \infty $, then, for any $ \th,\h > 0 $ and 
any $ T <\infty $, 
\eqs
  &&\pr \left\{\max_{1\leq t\leq mT}\sup_{B \in \mathcal{B}} \left| \bX_t\um\{B\} - \bp_t\um\{B\}  \right| 
                           > \ps(\th) + n^{-1/2+\h}\{(\bbb/A\ba)\th^{-1}e^{AT} + 1\} \right\} \\
  &&\qquad\qquad \Le 2mT(n+1)^V e^{-2n^{2\h}} +  n^{-\h}, 
\ens
where $\ba$, $A$ and~$\bbb$ are defined in~\Ref{ADB-A-defs}, and $\ps$ is as in~\Ref{ADB-I-psi-def}.
\end{theorem}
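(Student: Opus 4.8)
The plan is to control the target quantity by a triangle-inequality split into a coupling term and an independent-patches term, each of which has already been bounded, and then to combine the two tail bounds by a union bound; the only real work is to choose the two thresholds so that they sum to the bound claimed. Throughout I suppress the superscript~$(m)$, as in the rest of the section. For each $B\in\BB$ and each~$t$,
\[
   \Blm \bX_t\{B\} - \bp_t\{B\} \Brm
     \Le \Blm \bX_t\{B\} - \bW_t\{B\} \Brm + \Blm \bW_t\{B\} - \bp_t\{B\} \Brm ,
\]
so that, for any thresholds $y,\e > 0$, the event that $\max_{1\le t\le mT}\sup_{B\in\BB}\Blm \bX_t\{B\}-\bp_t\{B\}\Brm$ exceeds $\ps(\th)+y+\e$ is contained in the union of the event that $\max_{1\le t\le mT}\sup_{B}\Blm \bX_t\{B\}-\bW_t\{B\}\Brm > \ps(\th)+y$ and the event that $\max_{1\le t\le mT}\sup_{B}\Blm \bW_t\{B\}-\bp_t\{B\}\Brm > \e$. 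It therefore suffices to bound the probabilities of these two events and to add them.

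For the coupling term I would invoke the bound~\Ref{ADB-mean-diff}, which already gives
\[
   \pr\Bl \max_{1\le t\le mT}\sup_{B\in\BB}\Blm \bX_t\{B\}-\bW_t\{B\}\Brm > \ps(\th)+y \Br
      \Le \frac{\bbb}{yA\ba\th}\, n^{-1/2} e^{AT} .
\]
The truncation level~$\th$ and the additive term $\ps(\th)$ appear here because Lemma~\ref{ADB-CM:Lem1} controls the \emph{area-weighted} discrepancy $\ex\sn a_i J_{i,mt}$ rather than the raw number of discrepant patches; patches with $a_i < \th\ba$ cannot be handled this way and are instead absorbed wholesale into $\ps(\th)$, exactly as in the display preceding~\Ref{ADB-mean-diff-1}. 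Taking $y \Def n^{-1/2+\h}\bbb e^{AT}/(A\ba\th)$ makes the right-hand side collapse to $n^{-\h}$, while contributing $(\bbb/A\ba)\th^{-1}e^{AT}\, n^{-1/2+\h}$ to the error threshold.

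For the independent-patches term I would use Corollary~\ref{IPA:Lem1}, which bounds $\pr\{\sup_B\Blm\bW_t\{B\}-\bp_t\{B\}\Brm > \e\}$ by $2S_\BB(n)e^{-2n\e^2}$ for each fixed~$t$. Since the theorem requires uniformity over $t=1,\dots,mT$, I would take a union bound over these at most $mT$ time points and then insert Sauer's bound $S_\BB(n)\le (n+1)^V$, giving
\[
   \pr\Blb \max_{1\le t\le mT}\sup_{B\in\BB}\Blm \bW_t\{B\}-\bp_t\{B\}\Brm > \e \Brb
      \Le 2mT(n+1)^V e^{-2n\e^2} .
\]
Choosing $\e \Def n^{-1/2+\h}$ converts $e^{-2n\e^2}$ into $e^{-2n^{2\h}}$, matching the first summand of the claimed bound, and adds $n^{-1/2+\h}$ to the threshold.

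Adding the two thresholds gives $\ps(\th)+y+\e = \ps(\th)+n^{-1/2+\h}\{(\bbb/A\ba)\th^{-1}e^{AT}+1\}$, and adding the two probability bounds gives $2mT(n+1)^V e^{-2n^{2\h}}+n^{-\h}$, which is the theorem. The assembly is routine once these ingredients are in hand: the two genuinely hard estimates---the coupling bound of Lemma~\ref{ADB-CM:Lem1} and the Vapnik--Chervonenkis concentration bound of Corollary~\ref{IPA:Lem1}---are precisely the work already done. The one point that needs care is the bookkeeping around~$\th$: one must check that replacing the count of discrepant patches by its area-weighted version costs exactly the correction $\ps(\th)$, and that the symbol $\bbb$ in the statement is the same $H$ of~\Ref{ADB-A-defs} that enters~\Ref{ADB-mean-diff}, so that the chosen~$y$ really does reduce that bound to $n^{-\h}$. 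I expect no obstacle beyond this.
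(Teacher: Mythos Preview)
Your proposal is correct and follows exactly the route the paper takes: the paper's proof is the single sentence ``Combining Corollary~\ref{IPA:Lem1} and~\Ref{ADB-mean-diff}, with $y = n^{-1/2+\h}\bbb e^{AT}/(A\ba\th)$,'' and you have simply written out that combination in full, with the triangle-inequality split, the union bound over the $mT$ time points, Sauer's inequality, and the matching choice $\e = n^{-1/2+\h}$. Your bookkeeping on $\th$ and on the identification $\bbb = H$ is also correct.
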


\nin In particular, for asymptotics as~$n$ increases, if the quantities $a_i/\ba$ are uniformly bounded 
away from zero, $\ps(\th_0) = 0$ for all~$n$, for some $\th_0 > 0$. Then, if also $A$, $\max_{1\le i\le n}L_i$ 
and~$\bbb$ are bounded and~$T$ is fixed, Theorem~\ref{Thm1} gives a bound of asymptotic order~$n^{-\h}$ 
for the probability that the measures of any of the sets of~$\BB$
differ by more than $n^{-1/2+\h}$ at any time before~$mT$, for any $0 < \h < 1/2$, provided at least 
that~$m = m_n$ does not grow faster than a polynomially in~$n$.  These conditions can be relaxed in 
many ways. For instance, if the function~$\ps$ is bounded for all~$n$ by a function~$\hps$ such that 
$\lim_{\th\to0}\hps(\th) = 0$, then the right hand side of Theorem~\ref{Thm1} can be made small for 
any $\h < 1/2$ by choosing $\th = \th_n \to 0$ suitably slowly, with the measures of sets 
in~$\BB$ differing by at most $\ps(\th_n) + n^{-1/2+\h}$.  Thus, if $\hps(\th) = \th^\b$, one can 
take $\h = (2+\b)/\{4(1+\b)\}$ and $\th_n = n^{-1/\{4(1+\b)\}}$, giving approximation with 
accuracy $2n^{-\b/\{4(1+\b)\}}$ with failure probability of order $n^{-1/4}$. 

For Theorem~\ref{Thm1} to give useful asymptotics, it is more or less essential that the product~$AT$ 
should remain bounded as~$n$ increases.  In biological terms, $A$ is related to the maximal rate at 
which a patch can become empty or be recolonized, though it is not a direct expression of that
quantity.  $AT$ can be thought of as a corresponding estimate of the number of colonization
or catastrophic events that can occur in a single patch over the length of time over which
the approximation is made.

\subsection{Refined approximation}\label{refined}

Under ideal asymptotic circumstances, in which the quantities $a_i/\ba$ are uniformly bounded away from zero and both $A$ and~$\bbb$ are bounded, the upper bound given in~\Ref{ADB-mean-diff} for the mean $\ell_1$-distance between $n^{-1}X\um$ and~$n^{-1}\tX\um$ is of asymptotic order~$O(n^{-1/2})$. Similarly, the measures of sets under $\bW\um$ and~$\bp\um$ are shown by Corollary~\ref{IPA:Lem1} to differ by at most order $O(n^{-1/2}\sqrt{\log n})$.  Using~\Ref{ADB-mean-diff} together with Markov's inequality thus shows that this is the right order for the differences between the  measures of sets under $\bX\um$ and~$\bp\um$, except on a set of probability of order $O(\{\log n\}^{-1/2})$.  Although this bound on the probability of the exceptional set converges to zero
as $n\to\infty$, it does so extremely slowly.  In this section,  a more complicated argument is used to show that the probability of the exceptional set is typically rather smaller.  Once more, we suppress the superscript~$(m)$.

The aim is to show that the $\ell_1$-distance between $n^{-1}X$ and~$n^{-1}\tX$ is of asymptotic order~$O(n^{-1/2})$, except on an event whose probability is also of order~$O(n^{-1/2})$.  To do this, we examine the process $J$ of~\Ref{ADB-J-def} in more detail. From \Ref{ADB-X-diff-increment}, on the set $ \{J_{i,t} = 0 \} $,
\eqs
J_{i,t+1} & \leq & \left| \mathbb{I} \left( U_{i,t} \leq m^{-1} C_{i}(X_{t})\right) - \mathbb{I} \left(U_{i,t} \leq m^{-1} C_{i}(p_{t})\right) \right| \\
& & + \left| \mathbb{I} \left( U_{i,t} \leq m^{-1} C_{i}(X_{t})\right) - \mathbb{I} \left(U_{i,t} \leq m^{-1} C_{i}(p_{t})\right) \right|
\ens
Recalling~\Ref{ADB-F_n-def}, it follows from \Ref{ADB-C-diff} and~\Ref{ADB-E-diff} that
\eqa
\lefteqn{\pr(J_{i,t+1} = 1 \giv \ff_t \cap \{J_{i,t} = 0\} \cap F(r,t/m))} \nonumber\\
 & \leq & m^{-1} L_{i} \ex \left(  |S_i(X_{t}) - S_i(\tX_{t})|
    + |S_i(\tX_{t}) - S_i(p_t)|\giv \ff_t \cap \{J_{i,t} = 0\} \cap F(r,t/m)\right), \nonumber \\
 \label{Reviewer1-point13-1}
\ena
where  $\ff_t $ is the sigma algebra generated by $ J_{i,s},\, 0\le s\le t, 1\le i\le n$ and denotes the history of~$J$ until time~$t$. Combining \Ref{ADB-S-diff-1} with \Ref{Reviewer1-point13-1} yields
\eqs
   \pr(J_{i,t+1} = 1 \giv \ff_t \cap \{J_{i,t} = 0\} \cap F(r,t/m))
     &\le& P_i(J_t),
\ens
where
\eq\label{ADB-P-def}
     P_i(J) \Def m^{-1}L_i \Blb n^{-1}\sjn a_j s_{ji} J_j + \bbb_{in}\e_n(r) \Brb.
\en
Furthermore, the $(J_{i,t+1},\,1\le i\le n)$ are conditionally independent, given~$\ff_t$. Hence, on the event $F(r,T)$, the process~$J$ is stochastically dominated for all times $1\le t\le mT$ by a process~$J^1 := (J_t^1,\,1\le t\le mT)$ on $\xx$,  which can be recursively determined from a collection $(U_{i,t,l},\,1\le i\le n,\,t,l\in\Z_+)$ of independent uniform random variables on $[0,1]$, together with the initial condition
$J_{i,0}^1 = 0$ for all~$i$, according to the prescription
\eq\label{ADB-J1-recursion}
   J_{i,t+1}^1 \Eq J_{i,t}^1 + \slo \bI(U_{i,t+1,l} \le P_{i}(J_t^1) - l).
\en 
Note that, typically, one would expect to have $P_i(J_t^1) \le 1$ , so that all but the zero term in the $l$-sum would be zero, but this need not be the case. Letting $Z_t := \sn a_i J_{i,t}^1$, and defining
\eq\label{ADB-A2-def}
   A_2 \Def  \max_{1\le j\le n} n^{-1} \sn a_i^2 L_i s_{ij};\qquad H_2 \Def n^{-1}\sn a_i^2 L_i \bbb_{in},  
\en
we have the following bounds on the first two moments of~$Z_{mt}$.

\begin{lemma}\label{ADB-variance}
 Assume that $ f_{C,i} $ and~$f_{E,i}$ are Lipschitz continuous with Lipschitz constants  $ L_i(C) $ and~$L_i(E)$. Then, with the notation of \Ref{ADB-H-defs}, \Ref{ADB-A-defs} and~\Ref{ADB-A2-def}, we have
$$
   \ex Z_{mt}   \Le  A^{-1}H n\e_n(r)e^{At}  ; \qquad
   \var Z_{mt} \Le A^{-2}(A_2H + H_2 A) n\e_n(r)e^{2At}. \label{ADB-variance-bnd}
$$
\end{lemma}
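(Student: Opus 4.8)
The plan is to study the dominating chain $J^1$ via one-step recursions for the first two moments of $Z_t = \sn a_i J_{i,t}^1$, and then to iterate, exactly as in Lemma~\ref{ADB-CM:Lem1}. Conditionally on $J_t^1$ the increments $\D_{i,t} := J_{i,t+1}^1 - J_{i,t}^1 = \slo\bI(U_{i,t+1,l}\le P_i(J_t^1)-l)$ are independent across~$i$, since they are built from disjoint families of the $U_{i,t+1,l}$; moreover the indicators making up $\D_{i,t}$ are themselves independent Bernoulli variables with parameters $q_l := (P_i(J_t^1)-l)_+\wedge1$, so that $\ex(\D_{i,t}\giv J_t^1) = \slo q_l = P_i(J_t^1)$ and $\var(\D_{i,t}\giv J_t^1) = \slo q_l(1-q_l)\le P_i(J_t^1)$. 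Taking $\sn a_i(\cdot)$ of the conditional means, and using the symmetry $s_{ji}=s_{ij}$ to bound $\sn a_i L_i s_{ji}\le nA$ together with the identity $\sn a_i L_i \bbb_{in}=nH$, gives $\ex(Z_{t+1}\giv J_t^1)\le(1+m^{-1}A)Z_t + m^{-1}nH\e_n(r)$; iterating from $Z_0=0$ up to step~$mt$, with $(1+m^{-1}A)^{mt}\le e^{At}$, yields the first asserted bound.

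For the variance I would use the decomposition
\[
   \var Z_{t+1} \Eq \ex\{\var(Z_{t+1}\giv J_t^1)\} + \var\{\ex(Z_{t+1}\giv J_t^1)\}.
\]
By the conditional independence across~$i$, the first term equals $\ex\{\sn a_i^2\var(\D_{i,t}\giv J_t^1)\}\le\ex\{\sn a_i^2 P_i(J_t^1)\}$; writing this out and invoking the symmetric bound $\sn a_i^2 L_i s_{ji}\le nA_2$ and the identity $\sn a_i^2 L_i\bbb_{in}=nH_2$ shows it is at most $m^{-1}A_2\,\ex Z_t + m^{-1}nH_2\e_n(r)$, with $\ex Z_t$ controlled by the mean bound just established.

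The second term is the crux. Since $\ex(Z_{t+1}\giv J_t^1) = \sjn(a_j+m^{-1}b_j)J_{j,t}^1 + m^{-1}nH\e_n(r)$, with $b_j := n^{-1}a_j\sn a_i L_i s_{ji}\le Aa_j$, its variance is $\var\{\sjn c_j J_{j,t}^1\}$ for weights $0\le c_j = a_j+m^{-1}b_j\le(1+m^{-1}A)a_j$. Expanding, $\var\{\sjn c_j J_{j,t}^1\} = \sum_{j,k}c_j c_k\,\mathrm{Cov}(J_{j,t}^1,J_{k,t}^1)$, so if every covariance is nonnegative this is nondecreasing in the nonnegative weights, hence at most $(1+m^{-1}A)^2\var Z_t$. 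I expect this positive-association property to be the main obstacle, and would prove it by induction on~$t$: the initial state is deterministic, hence associated; and each coordinate of $J_{t+1}^1$ is a nondecreasing function of the pair $(J_t^1,\{1-U_{i,t+1,l}\})$, because $P_i$ is nondecreasing in every coordinate of $J_t^1$ and, after the harmless substitution $U\mapsto1-U$, the indicators are nondecreasing in the uniforms. As association is preserved under adjoining independent families and under coordinatewise nondecreasing maps, $J_t^1$ is associated for every~$t$, giving $\mathrm{Cov}(J_{j,t}^1,J_{k,t}^1)\ge0$ and hence the desired monotonicity in the weights. (Note that merely controlling $\ex Z_{mt}^2$ would not suffice, since $(\ex Z_{mt})^2$ is of larger order than the claimed variance bound, so one really must track the variance itself.)

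Combining the two terms gives $\var Z_{t+1}\le(1+m^{-1}A)^2\var Z_t + m^{-1}A_2\,\ex Z_t + m^{-1}nH_2\e_n(r)$. Substituting $\ex Z_t\le A^{-1}nH\e_n(r)(1+m^{-1}A)^t$ and iterating from $\var Z_0=0$ up to step~$mt$ produces two geometric sums; bounding $\sum_k(1+m^{-1}A)^{-k}$ and $(1+m^{-1}A)^2-1$ by elementary estimates and using $(1+m^{-1}A)^{2mt}\le e^{2At}$ gives the contributions $A^{-2}A_2H$ and $A^{-1}H_2 = A^{-2}H_2A$, namely the stated bound $A^{-2}(A_2H+H_2A)n\e_n(r)e^{2At}$. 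The only genuinely delicate point is the positive association needed for the second term; the rest is the same bookkeeping as in Lemma~\ref{ADB-CM:Lem1}.
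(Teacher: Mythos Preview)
Your proposal is correct and follows essentially the same route as the paper: the law of total variance decomposition, the bound $\var(Z_{t+1}\mid J_t^1)\le\sn a_i^2 P_i(J_t^1)\le m^{-1}A_2 Z_t+m^{-1}nH_2\e_n(r)$ for the first piece, positive association of the $J_{j,t}^1$ to control the weighted variance in the second piece, and then iteration of the resulting one-step recursion. The only noticeable difference is in how association is justified: the paper simply observes that each $J_{j,t}^1$ is a decreasing function of the whole independent family $(U_{i,s,l})$ and invokes the standard fact that coordinatewise monotone functions of independent variables are associated, whereas you argue inductively, adjoining the fresh uniforms at each step and applying preservation of association under monotone maps; the two arguments are equivalent.
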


\begin{proof}
The formula for $\ex Z_{mt}$ follows as in the proof of Lemma~\ref{ADB-CM:Lem1}, but with $n^{-1/2}\bbb_{in}$ replaced by~$n\e_n(r)\bbb_{in}$ in~\Ref{ADB-mean-recursion}.  For the variance, it is immediate from~\Ref{ADB-J1-recursion} that
\[
    \var(Z_{t+1} \giv \ff^1_{t}) \Le \sn a_i^2 P_i(J_{t}^1) \Le m^{-1}A_2 Z_{t} + m^{-1}H_2 n\e_n(r),
\]
giving
\eq\label{ADB-variance-1}
    \ex\{\var(Z_{t+1} \giv \ff^1_{t})\} \Le m^{-1}\{A_2 \ex Z_{t} + H_2 n\e_n(r)\}.
\en
On the other hand, again from~\Ref{ADB-J1-recursion},
\eq\label{ADB-variance-2}
    \var\{\ex( Z_{t+1} \giv \ff^1_{t})\} \Eq \var\Blb \sjn (a_j + b_j) J_{j,t}^1 \Brb,
\en
where
\[
    b_j \Def m^{-1}\sn a_i L_i  n^{-1} a_j s_{ji} \Le m^{-1}A a_j.
\]
Since the $(J_{j,t}^1,\,1\le j\le n)$ are all decreasing functions of the independent random variables $(U_{i,s,l},\,1\le i\le n,\,s,l\in\Z_+)$, they are positively associated, implying that
\eq\label{ADB-variance-3}
    \var\Blb \sjn (a_j + b_j) J_{j,t}^1 \Brb \Le \var\Blb \sjn (1+m^{-1}A) a_j J_{j,t}^1 \Brb
     \Eq (1+m^{-1}A)^2 \var Z_{t}.
\en
Thus, from \Ref{ADB-variance-1} -- \Ref{ADB-variance-3}, it follows that
\[
    \var Z_{t+1} \Le (1+m^{-1}A)^2 \var Z_{t} + m^{-1}n\e_n(r)\{(A_2 H/A) \exp\{At/m\} + H_2 \}.
\]
Solving this recursion gives
\[
    \var Z_t \Le A^{-2}(A_2 H + H_2 A) n\e_n(r) \exp\{2At/m\},
\]
and the lemma is proved.
\end{proof}

As a direct result of Lemma~\ref{ADB-variance}, we have the following theorem.

\begin{theorem} \label{ADB-refined-Thm}
Assume that $ f_{C,i}$ and~$f_{E,i}$ are Lipschitz continuous with Lipschitz constants $ L_i(C)$ and~$L_i(E) $.
Suppose that we can choose $r \le n/\log n$ such that $\{2r-V-1\}\log n \ge \log (m/A)$. 
If $ \mathcal{B} $ has VC dimension $ V < \infty $, then, for any $ \th > 0 $ and  $ T <\infty $, 
\eqs
  &&\pr\left\{\max_{1\leq t\leq mT}\sup_{B \in \mathcal{B}} \left| \bX_t\um\{B\} - \bp_t\um\{B\}  \right| 
                           > \ps(\th) + \{2(\bbb/A\ba)\th^{-1}e^{AT}+1\}\e_n(r) \right\} \\
  &&\qquad \Le \frac{2AT}n + \frac{2^{V+1}AT}n
              +  \frac1{n\e_n(r)}\,\frac{A_2\bbb + H_2 A}{\bbb^2},\phantom{XXX}
\ens
where $\e_n(r)$ is defined in~\Ref{ADB-eps-def}, $\ba$, $A$ and~$\bbb$ in~\Ref{ADB-A-defs}, 
$A_2$ and~$H_2$ in~\Ref{ADB-A2-def}, and $\ps$ in~\Ref{ADB-I-psi-def}.
\end{theorem}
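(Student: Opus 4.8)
The plan is to reuse the decomposition behind Theorem~\ref{Thm1}, replacing its final Markov-inequality step by a Chebyshev step that exploits the variance estimate of Lemma~\ref{ADB-variance}. Using the triangle inequality $|\bX_t\{B\}-\bp_t\{B\}| \le |\bX_t\{B\}-\bW_t\{B\}| + |\bW_t\{B\}-\bp_t\{B\}|$, I split the threshold $\ps(\th)+\{2(\bbb/A\ba)\th^{-1}e^{AT}+1\}\e_n(r)$ into $\ps(\th)+2(\bbb/A\ba)\th^{-1}e^{AT}\e_n(r)$ for the $\bX$--$\bW$ discrepancy and $\e_n(r)$ for the $\bW$--$\bp$ discrepancy, so that the target probability is at most the sum of the two corresponding exceedance probabilities, the supremum over $B$ and maximum over $t\le mT$ being distributed across the two terms.

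For the $\bW$--$\bp$ term I apply Corollary~\ref{IPA:Lem1} with $\e=\e_n(r)$ and a union bound over the times $1\le t\le mT$. Since $2n\e_n(r)^2 = 2r\log n$ and $S_{\BB}(n)\le(n+1)^V\le 2^Vn^V$, this gives a bound of order $2^{V+1}mT\,n^{V-2r}$; the hypothesis $\{2r-V-1\}\log n \ge \log(m/A)$ is exactly what is needed to rewrite this as $mn^{V-2r}\le An^{-1}$, so that the term is at most $2^{V+1}AT/n$.

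For the $\bX$--$\bW$ term I invoke the reduction~\Ref{ADB-mean-diff-1}, which with $y = 2(\bbb/A\ba)\th^{-1}e^{AT}\e_n(r)$ bounds the exceedance probability by $\pr\{\sum_i a_iJ_{i,mT} > 2(\bbb n/A)e^{AT}\e_n(r)\}$. I then condition on the event $F(r,T)$ of~\Ref{ADB-F_n-def}: off $F(r,T)$ I pay $\pr(F^c(r,T))\le 2mT\,n^{-2r+1}$, which the hypothesis (using $V\ge1$, whence $2r-2\ge 2r-V-1$) bounds by $2AT/n$; on $F(r,T)$ the stochastic domination of $J$ by the process $J^1$ of~\Ref{ADB-J1-recursion} yields $\pr\{\sum_i a_iJ_{i,mT} > c,\,F(r,T)\}\le\pr\{Z_{mT}>c\}$. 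The decisive observation is that, because $\bbb=H$, the chosen threshold $2(\bbb n/A)e^{AT}\e_n(r)$ equals precisely twice the mean bound $\mu := (Hn/A)e^{AT}\e_n(r) \ge \ex Z_{mT}$ supplied by Lemma~\ref{ADB-variance}; hence $\{Z_{mT}>2\mu\}\subseteq\{Z_{mT}-\ex Z_{mT}>\mu\}$, and Chebyshev's inequality together with the variance bound of Lemma~\ref{ADB-variance} gives $\var Z_{mT}/\mu^2 = (A_2\bbb+H_2A)/(\bbb^2 n\e_n(r))$, the last of the three stated terms. Adding the three contributions completes the argument.

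The genuinely delicate point is the correct handling of the event $F(r,T)$: the domination of $J$ by $J^1$ is valid only there, so the price $\pr(F^c(r,T))$ and the Chebyshev bound for $Z_{mT}$ must be added rather than merged. Everything else is bookkeeping, and I would stress that the deliberate factor $2$ in the theorem's threshold is not cosmetic — it places the threshold at exactly twice the mean of the dominating process, which is what makes the deviation $\mu$ of the right size for the Chebyshev step to produce the clean final term.
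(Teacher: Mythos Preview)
Your proposal is correct and follows essentially the same route as the paper's proof: split via $\bW$, use Corollary~\ref{IPA:Lem1} at level~$\e_n(r)$ (with the hypothesis on~$r$ converting the union-bound over~$mT$ times into the $2^{V+1}AT/n$ term), pay $\pr(F^c(r,T))\le 2AT/n$ from~\Ref{ADB-S-bnd}, and on~$F(r,T)$ pass to the dominating process~$J^1$ and apply Chebyshev with Lemma~\ref{ADB-variance} at the threshold $2\mu$. Your explicit remark that $V\ge1$ is what turns the bound~\Ref{ADB-S-bnd} into $2AT/n$ is a useful clarification the paper leaves implicit.
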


\begin{proof}
The conditions on $m$ and~$r$ ensure that $\pr[F^c(r,T)] \le 2ATn^{-1}$, using \Ref{ADB-S-bnd}, 
and that Corollary~\ref{IPA:Lem1} with
$\e = \e_n(r)$ gives a bound~$\g_n$ for the error probability satisfying $m\g_n \le 2^{V+1}An^{-1}$; 
they can clearly be satisfied for all~$n$ large enough, if~$m=m_n$ is such that $m_n/A$
grows at most like a fixed power 
of~$n$. The theorem now follows from  Corollary~\ref{IPA:Lem1}, \Ref{ADB-mean-diff-1} and 
Lemma~\ref{ADB-variance}, because, on~$F(r,T)$, $\bone^T J_t^1$ is an upper bound for $\bone^T J_t$. 
\end{proof}

The statement of Theorem~\ref{ADB-refined-Thm} can be illustrated by first considering a context in which the~$a_i$ are all equal to some value~$a$, the~$s_{ij}$ are all equal to~$1$, and the $L_i$ are all equal to some value~$L$; this represents a community of patches of equal merit where the distance between patches has no effect on the colonisation probabilities. Then $\ba = H_{in} = a$, $A = aL$, $H = A_2 = a^2L$ and $H_2 = a^3L$, so that
\[
     \frac H{A\ba} \Eq 1 \quad\mbox{and}\quad \frac{A_2\bbb + H_2 A}{\bbb^2}  \Eq 2.
\]
Thus, taking $\th =  1$, the error in approximating $\bX_t\um\{B\}$ by $\bp_t\um\{B\}$ is uniformly bounded for $B \in \BB$ by a quantity which grows exponentially in time~$T$ (corresponding to~$mT$ steps in the $m$-process), and is of order $O(n^{-1/2}\sqrt{\log n})$ as~$n$ increases; this bound is valid except on an event of probability of order~$O(n^{-1/2})$.  Suppose, instead, that for each $ i $, exactly $ d_{i} $ of the $ s_{ij} $ are equal to~$1$ and the rest are zero. Treating the metapopulation network as a graph, $ d_{i} $ is the degree of patch $ i $. Then 
\[
   \frac H{A\ba} \Eq n^{1/2} \left( \frac{n^{-1} \sum_{i=1}^{n} d_{i}^{1/2} }{\max_{1\leq i\leq n} d_{i}}\right) \quad\mbox{and}\quad \frac{A_2\bbb + H_2 A}{\bbb^2}  \Eq
              2n^{-1/2} \left( \frac{\max_{1\leq i\leq n} d_{i}}{n^{-1} \sum_{i=1}^{n} d_{i}^{1/2} }\right)  ,
\]
so the bound given in Theorem~\ref{ADB-refined-Thm} is determined by the maximal degree and a moment of the degree distribution.
In particular, if $ d_{i} = d(n) $ for all $ i $, then the  probability of the exceptional event given in Theorem~\ref{ADB-refined-Thm} is of smaller order than $O(n^{-1/2})$ if $d(n)/n \to 0$, but the bound on the differences between $\bX_t\um\{B\}$ and $\bp_t\um\{B\}$ is of larger order~$O(d(n)^{-1/2}\sqrt{\log n})$.

\section{Comparisons in continuous time}\label{continuous}

The arguments in the previous sections can also be applied to the spatially realistic Levins model. One approach is to use the results of the previous sections, and to consider the limit as $m \to \infty$.  More precisely, one can choose~$m = m_n$ so large that the continuous time random process is identical to a discrete time process on a close mesh of time points, except on an event of negligible probability.  Then, at least when the $L_i(C)$ and~$L_i(E)$ are uniformly bounded, the solution to the differential equations~\Ref{Eq3} can be shown for such~$m$ to be very close to the solution to the difference equations~\Ref{Eq2a}.  However, in order to prove a theorem in the
same generality as those in the previous section, showing that the measures $\bX(t)$ and~$\bp(t)$ defined in~\Ref{ADB-measure-defs} are uniformly close for $t \in [0,T]$, it is easier to argue directly. 

In order to show that the Markov process~$X$ defined in~\Ref{ADB-Eq4} is close to the solution~$p$ to the differential equations~\Ref{Eq3} with the same initial value, we proceed as before, using an intermediate approximation~$W$.  This is an inhomogeneous Markov process on~$\xx$, with time dependent transition rates
$$
  \begin{array}{rcl}
     W \ \to\ W + \d_i^n  &\mbox{at rate} & C_i(p(t))(1 - W_i); \\
     W \ \to\ W - \d_i^n  &\mbox{at rate} & E_i(p(t))W_i.
  \end{array}\label{ADB-W-def}
$$
We proceed in two steps, showing first that the measures $\bW(t)$ and~$\bp(t)$ are close for all $0\le t\le T$, when evaluated at the elements~$B$ of a VC-class~$\BB$, where
$$\label{ADB-W-bar-def}
  \bW(t)\{B\} \Def n^{-1} \sum_{i=1}^{n} W_{i}(t)\, \mathbb{I}\left[(z_{i},a_{i}) \in B \right].
$$
We then show that $W$ and~$X$ can be coupled in such a way that $n^{-1}\sn a_i|W_i(t) - X_i(t)|$ remains 
small for $0\le t\le T$, from which the closeness of $\bW(t)$ and~$\bX(t)$ for such~$t$ then follows as before. 
  
To formulate the theorem, we introduce
\[
    k(C,E) \Def \max_{1\le i\le n}\max_{x\in\xx}\max\{C_i(x),E_i(x)\},
\]
the maximum possible rate of change of state of an individual patch.

\begin{theorem}\label{ADB-Levins-Thm}
Assume that $ f_{C,i} $ and~$f_{E,i}$ are Lipschitz continuous with Lipschitz constants $ L_i(C)$ and~$L_i(E)$. 
Assume that $An^{-1} \le k(C,E) \le An^\a$ for some $\a< \infty$, and that $ \mathcal{B} $ has VC 
dimension $ V < \infty $.  Choose any
\eq
   2r\ >\ V+5+2\a + (V+1)(\log 2/\log n). \label{ADB-r-choice}
\en 
Then, for any $ \th,\h > 0 $ and any $ T <\infty $, 
\eqs
  &&\pr \left\{\sup_{0\leq t\leq T}\sup_{B \in \mathcal{B}} \left| \bX(t)\{B\} - \bp(t)\{B\}  \right| 
                           > \ps(\th) + 2n^{-1} + \e_n(r) + n^{-1/2+\h}\th^{-1}e^{AT}\right\} \\
  &&\qquad\qquad \Le  \frac{5(AT+1)}n + 
            \frac{\bbb}{A\ba}n^{-\h}\sqrt{r\log n} , 
\ens
and
\eqs
  &&\pr\left\{\sup_{0\leq t\leq T}\sup_{B \in \mathcal{B}} \left| \bX(t)\{B\} - \bp(t)\{B\} \right| 
                           > \ps(\th) + 2n^{-1} + \e_n(r) + 2\e_n(r)(\bbb/A\ba)\th^{-1}e^{AT} \right\} \\
  &&\qquad \Le  \frac{5(AT+1)}n
              +   \frac1{n\e_n(r)}\,  \frac{2A_2\bbb + A\bbb_2}{2\bbb^2} ,\phantom{XXX}
\ens
where $\e_n(r)$ is as defined in~\Ref{ADB-eps-def}, $\ba$, $A$ and~$\bbb$ in~\Ref{ADB-A-defs}, 
$A_2$ and~$H_2$ in~\Ref{ADB-A2-def}, and $\ps$ in~\Ref{ADB-I-psi-def}.
\end{theorem}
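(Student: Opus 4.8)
The plan is to mirror the two-step scheme of Theorems~\ref{Thm1} and~\ref{ADB-refined-Thm}, discretising the interval $[0,T]$ by a fine mesh in place of the grid $1,\dots,mT$. The starting point is that the intermediate inhomogeneous process~$W$, with rates $C_i(p(t))(1-W_i)$ and $E_i(p(t))W_i$, has conditionally independent coordinates with $\ex W_i(t) = p_i(t)$: writing $q_i(t) := \pr(W_i(t)=1)$, the forward equation of the two-state chain with these rates is precisely the Levins equation~\Ref{Eq3}, so $q_i\equiv p_i$ by uniqueness. Hence, at each fixed~$t$, $\bW(t)\{B\}-\bp(t)\{B\}$ is a centred sum of independent terms in $[-n^{-1},n^{-1}]$, and $S_i(W(t))-S_i(p(t))$ is centred with variance at most $n^{-1}\bbb_{in}^2$, so that Corollary~\ref{IPA:Lem1} and Corollary~\ref{ADB-IPA:Cor2} apply verbatim \emph{pointwise in~$t$}.

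The genuinely new ingredient, and the step I expect to be the main obstacle, is upgrading these pointwise statements to a supremum over $t\in[0,T]$. I would introduce a mesh $0=t_0<\dots<t_N=T$ of width~$h$ and exploit that $\bp(\cdot)\{B\}$ and $S_i(p(\cdot))$ are Lipschitz in~$t$ with constants controlled by $k(C,E)$, whereas $\bW(\cdot)\{B\}$ and $S_i(W(\cdot))$ are piecewise constant, changing only at the jumps of~$W$. Since each coordinate jumps at rate at most~$k(C,E)$, the total jump rate is at most $nk(C,E)$, and choosing $h$ of order $A/(n^3k(C,E)^2)$ makes the probability that some mesh interval carries two or more system jumps of order $(AT+1)/n$; on the complementary event each interval carries at most one jump, so the oscillation of $\bW(\cdot)\{B\}$ over an interval is at most~$n^{-1}$ while that of $\bp(\cdot)\{B\}$ is at most $k(C,E)h$, which is $O(n^{-2})$ because $k(C,E)\ge An^{-1}$. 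This is the source of the $2n^{-1}$ in the thresholds. Applying Corollary~\ref{IPA:Lem1} with $\e=\e_n(r)$ at each of the $N$ mesh points, and the analogous union bound through Corollary~\ref{ADB-IPA:Cor2} for the continuous-time version of the event $F(r,T)$ of~\Ref{ADB-F_n-def}, controls the mesh-point errors. Because $k(C,E)\le An^\a$, $N$ is polynomial in~$n$, and the choice~\Ref{ADB-r-choice} of~$r$ is exactly what forces $2N(n+1)^Vn^{-2r}$ and the corresponding bound for $F^c(r,T)$ to be of order $(AT+1)/n$; collecting the several such contributions yields the common term $5(AT+1)/n$ and establishes $\sup_{0\le t\le T}\sup_{B\in\BB}|\bW(t)\{B\}-\bp(t)\{B\}|\le\e_n(r)+2n^{-1}$ off an event of that order.

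For the coupling step I would realise $X$ and~$W$ on a common graphical construction: attach to each patch~$i$ independent rate-$k(C,E)$ Poisson clocks for colonisation and extinction carrying i.i.d.\ uniform marks, accepting a colonisation of~$i$ in~$X$ when the mark lies below $C_i(X)/k(C,E)$ and in~$W$ when it lies below $C_i(p(t))/k(C,E)$, and similarly for extinction. The two chains can first disagree at patch~$i$ only at a clock whose mark falls between these thresholds, so, writing $J_i(t):=\bI(X_i(s)\ne W_i(s)\ \mbox{for some}\ s\le t)$ for the monotone running indicator, the rate at which $J_i$ passes from~$0$ to~$1$ is at most $|C_i(X)-C_i(p)|+|E_i(X)-E_i(p)|$. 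Combining Lipschitz continuity, the estimate $|S_i(X)-S_i(W)|\le n^{-1}\sjn a_js_{ji}J_j$ of~\Ref{ADB-S-diff-1}, and $|S_i(W(t))-S_i(p(t))|\le\bbb_{in}\e_n(r)$ valid on $F(r,T)$, exactly as in Lemma~\ref{ADB-CM:Lem1}, gives the differential inequality $\frac{d}{dt}\sn a_i\ex J_i(t)\le A\sn a_i\ex J_i(t)+n\e_n(r)\bbb$, whence Gr\"onwall yields $\ex\sn a_iJ_i(T)\le(\bbb/A)\,n\e_n(r)\,e^{AT}$ on $F(r,T)$. Monotonicity of~$J$ turns this into a uniform-in-$t$ bound, and the thresholding $I(\th),\ps(\th)$ of~\Ref{ADB-I-psi-def} converts it, as in~\Ref{ADB-set-bnd}--\Ref{ADB-mean-diff-1}, into control of $\sup_{0\le t\le T}\sup_{B\in\BB}|\bX(t)\{B\}-\bW(t)\{B\}|$. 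Markov's inequality with $y=n^{-1/2+\h}\th^{-1}e^{AT}$ then produces the coupling failure probability $(\bbb/A\ba)n^{-\h}\sqrt{r\log n}$, and the first displayed bound follows by the triangle inequality.

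The sharper second bound is obtained, exactly as Theorem~\ref{ADB-refined-Thm} refines Theorem~\ref{Thm1}, by replacing Markov's inequality with a second-moment estimate. On $F(r,T)$ the process~$J$ is stochastically dominated by the continuous-time analogue of the process~$J^1$ of~\Ref{ADB-J1-recursion}, driven by the same rates $P_i$ of~\Ref{ADB-P-def}; the first and second moments of $Z(t):=\sn a_iJ_i^1(t)$ obey the continuous-time counterpart of Lemma~\ref{ADB-variance} (the constant on the $H_2$ term differing because the generator of a birth process replaces the discrete recursion), the positive-association argument of~\Ref{ADB-variance-3} remaining valid since the coordinates of $J^1(t)$ stay monotone functionals of the independent driving clocks. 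Applying Chebyshev's inequality to $Z(T)$ with threshold $y=2\e_n(r)(\bbb/A\ba)\th^{-1}e^{AT}$, which is of the order of twice $\ex Z(T)$, replaces $(\bbb/A\ba)n^{-\h}\sqrt{r\log n}$ by the variance term $\frac1{n\e_n(r)}\cdot\frac{2A_2\bbb+AH_2}{2\bbb^2}$, while Step~1 and the mesh analysis contribute the same $5(AT+1)/n$; this yields the second bound.
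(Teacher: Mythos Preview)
Your overall architecture matches the paper's: establish $\ex W_i(t)=p_i(t)$, discretise $[0,T]$ on a mesh of width $h_n$ with $Ah_n=n^{-3-2\a}$, control the oscillation of $\bW$ and $\bp$ between mesh points by the ``at most one jump per cell'' argument, apply the VC and $S_i$ concentration bounds at mesh points, and then couple $X$ to $W$. The choice~\Ref{ADB-r-choice} is used exactly as you describe.

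Two points of genuine difference are worth flagging. First, the paper realises the coupling not through a thinned graphical construction but by writing down an explicit bivariate inhomogeneous Markov chain on $\xx^2$ with the maximal-overlap rates and appealing to Burke--Rosenblatt for the marginals; your Poisson-clock construction is equivalent and arguably more transparent. Second, and more substantively, for the refined bound the paper does \emph{not} build a continuous-time dominating process $J^1$ and redo Lemma~\ref{ADB-variance}. Instead it works directly with $Z(t)=\sum a_iJ_i(t)$ for the actual coupled process, introduces the stopping time $\t_n(r)=\inf\{t:\max_i\bbb_{in}^{-1}|S_i(W(t))-S_i(p(t))|\ge3\e_n(r)\}$, and uses Dynkin's formula: $M(t)=Z(t)e^{-At}-\int_0^t e^{-As}\{F(s)-AZ(s)\}\,ds$ is a martingale with explicit predictable quadratic variation, and optional sampling at $\t_n(r)\wedge T$ gives both the mean bound $\ex Z(\t_n(r,t))\le A^{-1}n\e_n(r)\bbb(e^{At}-1)$ and, via Chebyshev on $M$, the variance term $(2A_2\bbb+A\bbb_2)/(2\bbb^2 n\e_n(r))$. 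This sidesteps the positive-association step of~\Ref{ADB-variance-3} entirely. Your $J^1$ route should also work, but it requires you to verify association for the continuous-time interacting birth process, whereas the martingale argument does not.

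One sloppy spot you should clean up: writing ``$\frac{d}{dt}\sum a_i\ex J_i(t)\le\ldots$ on $F(r,T)$'' is not meaningful, since the left side is deterministic. The paper's fix is precisely the stopping time $\t_n(r)$: bound the drift of $Z$ on $\{s<\t_n(r)\}$ and apply optional sampling to the compensated martingale. You need the same device (or, for the first bound only, revert to the unconditional Jensen estimate $\ex|S_i(W(t))-S_i(p(t))|\le n^{-1/2}\bbb_{in}$ as in Lemma~\ref{ADB-CM:Lem1}, which avoids $F(r,T)$ altogether).
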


\begin{proof}
 For given initial condition, the linear equations
\eq\label{ADB-linear-w}
  \frac{dw_i}{dt} \Eq (1-w_i) C_i(p(t)) - w_i E_i(p(t)),
\en
with time dependent coefficients $C_i(p(t))$ and~$E_i(p(t))$, $1\le i\le n$, $t\ge0$, have a unique solution, 
giving $w(t) = p(t)$ for all~$t$ if $w(0) = p(0)$.  On the other hand, \Ref{ADB-linear-w} is satisfied by 
$w(t) := \ex\{W(t) \giv W(0) = p(0)\}$, so that $\ex W(t) = p(t)$ for all~$t$ if $W(0) = p(0)$. Since, 
for each~$t$, the~$(W_i(t),\,1\le i\le n)$ are independent Bernoulli random variables, we can apply 
Lemma~\ref{ADB-IPA:Lem0} to deduce that, for any~$t,\e > 0$,
\begin{eqnarray}\label{ADB-VC-approx}
   \pr\left\{\sup_{B \in \mathcal{B}} \left| \bW(t)\{B\} - \bp(t)\{B\}  \right| 
           > \e \right\} \Le 2 S_{\mathcal{B}}(n) \exp(-2n\e^{2}),
\end{eqnarray}
and also that, as for Corollary~\ref{ADB-IPA:Cor2},
\eq\label{ADB-S-bnd-1}
   \pr\Blb \Bigl| S_{i}(W(t)) - S_i(p(t)) \Bigr| > \e \Brb \Le 2\exp\{-2n\e^2/\bbb_{in}^2\}.
\en

Fix any~$T > 0$.  For $h=h_n > 0$, to be chosen later, set $t_j := jh$, $0\le j\le \lceil T/h \rceil$. Then
\eqs
   \sup_{0\le t\le T}\sup_{B\in\BB}|\bW(t)\{B\} - \bp(t)\{B\}| 
       & \leq & \max_{1\leq j \leq n} \sup_{t_{j-1}\leq s \leq t_{j-1}} \sup_{B \in \mathcal{B}} 
              |\bW(s)\{B\} - \bW(t_{j-1})\{B\} |   \\
   & &\mbox{} +  \max_{1\leq j \leq n} \sup_{t_{j-1}\leq s \leq t_{j-1}} \sup_{B \in \mathcal{B}} 
                  | \bp(s)\{B\} - \bp(t_{j-1})\{B\}| \\
   & & \mbox{} + \max_{1\leq j \leq n} \sup_{B \in \mathcal{B}} |\bW(t_{j-1})\{B\} - \bp(t_{j-1})\{B\}| .
\ens
The overall jump rate of the process~$W$ cannot exceed $nk(C,E)$, so that the probability that~$W$ makes more than 
one jump in one of the intervals $(t_{j-1},t_{j}]$, $1\le j\le \lceil T/h \rceil$, is at most $\lceil T/h \rceil \{nhk(C,E)\}^2 \le A(T+h)n^{-1}$ if $h_n \le n^{-3}A\{k(C,E)\}^{-2}$. Ensure this by taking $Ah_n = n^{-3-2\a}$.  So
\[
\pr \left( \max_{1\leq j \leq n} \sup_{t_{j-1}\leq s \leq t_{j-1}} \sup_{B \in \mathcal{B}} 
       |\bW(s)\{B\} - \bW(t_{j-1})\{B\} |  > n^{-1} \right) \leq (AT+1)n^{-1}.
\]
Then, on the other hand, because $|dp_i/dt| \le k(C,E)$ for all $i$ and~$t$, we have 
\[
    \sup_{t_{j-1} \le s,t \le t_j}\sn |p_i(s) - p_i(t)| \Le nhk(C,E),\quad 1\le j\le \lceil T/h \rceil,
\]
and this does not exceed~$n^{-1}$ for $h_n$ as above. From inequality (\ref{ADB-VC-approx}),
\[
\pr \left(\max_{1\leq j \leq n} \sup_{B \in \mathcal{B}}  |\bW(t_{j-1})\{B\} - \bp(t_{j-1})\{B\}| > \e_n(r) \right) 
         \leq 2 \lceil T/h_{n}\rceil S_{\BB}(n) \exp(-2n\e^{2}_{n}(r)). 
\]
Hence, for this choice of~$h_n$, and with $\e_n(r)$ as defined in~\Ref{ADB-eps-def}, for~$r$ as 
in~\Ref{ADB-r-choice}, so that $h_n^{-1}S_{\BB}(n)n^{-2r} \Le An^{-1}$, we have
\eq\label{ADB-first-approx}
  \pr\Bl \sup_{0\le t\le T}\sup_{B\in\BB}|\bW(t)\{B\} - \bp(t)\{B\}| > 2n^{-1} + \e_n(r) \Br
    \Le 3(AT+1)n^{-1}.
\en
Note also that, if~$W$ has at most one jump in each of the intervals $(t_{j-1},t_j]$, then, for 
$s \in (t_{j-1},t_j]$, $S_i(W(s))$ takes one of the values $S_i(W(t_{j-1}))$ or $S_i(W(t_j))$. 
Hence
\eqs
  \lefteqn{\sup_{t_{j-1} \leq s < t_{j}} | S_{i}(W(s)) - S_{i}(p(s))|} \\
  &&\Le \sup_{t_{j-1} \leq s \le t_{j}}\max\{|S_{i}(W(t_{j-1})) - S_{i}(p(s)) |,|S_{i}(W(t_{j})) - S_{i}(p(s)) |\}\\
  &&\Le \max\{|S_{i}(W(t_{j-1})) - S_{i}(p(t_{j-1}))|, |S_{i}(W(t_{j})) - S_{i}(p(t_{j}))|\} \\
  &&\qquad\mbox{}     +  \sup_{t_{j-1} \le s,t \le t_j} |S_{i}(p(s)) - S_{i}(p(t))|.
\ens
With the above choice of~$h_n$, again because $|dp_i/dt| \le k(C,E)$, 
\eqs
   |S_i(p(s)) - S_i(p(t))| &\le& h_n k(C,E) n^{-1}\sjn a_j s_{ji} \Le h_n k(C,E) \bbb_{in} \Le \e_n(r)\bbb_{in},
\ens
for any $i$ and any $s,t \in [t_{j-1},t_{j+1}]$, since $Ah_n k(C,E) = n^{-3-2\a}$. 
Therefore, for any $i$ and~$j$, 
\eqs
  \lefteqn{\sup_{t_{j-1} \leq s\leq t_{j}} |S_{i}(W(s)) - S_{i}(p(s)) | } \\
  &&\Le \max\{| S_{i}(W(t_{j})) - S_{i}(p(t_{j}))|, |S_{i}(W(t_{j-1})) - S_{i}(p(t_{j-1}))|\} + \e_n(r)\bbb_{in}, 
\ens
and hence, by~\Ref{ADB-S-bnd-1}, 
\eq\label{ADB-S-control}
   \pr\Bl \sup_{0\le t\le T}\max_{1\le i\le n} H_{in}^{-1}|S_i(W(t)) - S_i(p(t))| > 2\e_n(r) \Br
     \Le \frac{2n(T+h_n)}{n^{2r}h_n} \Le \frac{2(AT+1)}n,
\en
because~$r$ is also such that $h_n^{-1}n^{-2r+1} \Le An^{-1}$.

We now couple $W$ and~$X$, so as to remain close on $[0,T]$, as the components of a bivariate inhomogeneous 
Markov process $\{(W(t),X(t)),\,t\ge0\}$.  For any time~$t$ and any state $(w,x) \in \xx^2$ such that 
$w_i = x_i = 1$, the transition rates for jumps in the $i$-coordinates are given by
\eqs
   \begin{array}{rcll}
    (w,x) &\to& (w,x) - (\d_i^n,\d_i^n) &\atrate  \min\{E_i(p(t)),E_i(x)\};  \\
    (w,x) &\to& (w,x) - (\d_i^n,0)  &\atrate (E_i(p(t)) - E_i(x))_+; \\
    (w,x) &\to& (w,x) - (0,\d_i^n)  &\atrate (E_i(x) - E_i(p(t)))_+, 
   \end{array}\label{ADB-E-rates}
\ens
and the analogous expressions hold for $w_i = x_i = 0$.  For $(w_i,x_i) = (1,0)$, the rates are
\eqs
   (w,x) &\to& (w,x) - (\d_i^n,0)  \atrate E_i(p(t))\\
    (w,x) &\to& (w,x) + (0,\d_i^n)  \atrate C_i(x),
\ens 
and the analogous expressions hold for $(w_i,x_i) = (0,1)$; initially, $W(0) = X(0) \in \xx$. Using a 
similar calculation to \citet[Section 5]{BR:58}, the marginal processes $X$ and~$W$ are seen to be  
Markov chains with the desired transition rates. 

Define~$J(t) \in \xx$ by
\eq\label{ADB-J-def-cts}
   J_i(t) \Def 1 - \mathbb{I}\left[W_i(s) = X_i(s),\,0\le s\le t\right],
\en
and set $Z(t) := \sn a_i J_i(t)$; for $(t,w,x,J) \in \re_+\times\xx^3$, define
$$
  \begin{array}{rcl}
   F(t,w,x,J) &:=& \sn a_i(1-J_i)\{(1-w_i)|C_i(x) - C_i(p(t))| + w_i|E_i(x) - E_i(p(t))|\};\\
   G(t,w,x,J) &:=& \sn a_i^2(1-J_i)\{(1-w_i)|C_i(x) - C_i(p(t))| + w_i|E_i(x) - E_i(p(t))|\}.
  \end{array} \label{ADB-FG-def}
$$
Now $Z(t)e^{-At}$ is a function of the inhomogeneous Markov process
$\{(W(t),X(t),J(t)),$
$t\ge0\}$.  Because $W_i(t) = X_i(t)$ whenever $J_i(t) = 0$,
$Z(t)e^{-At}$ has infinitesimal drift and covariance given by
\[
    e^{-At}\{F(t,W(t),X(t),J(t)) - AZ(t)\}\quad\mbox{and}\quad e^{-2At}G(t,W(t),X(t),J(t))
\]
respectively.  Dynkin's formula then implies that
\[
   M(t) \Def Z(t)e^{-At} - \int_0^t e^{-As}\{F(s,W(s),X(s),J(s)) - AZ(s)\}\,ds 
\]
is a martingale, with predictable quadratic variation 
\eq\label{ADB-QV}
   \langle M \rangle_t \Def \int_0^t e^{-2As}G(s,W(s),X(s),J(s))\,ds.  
\en
Define the stopping time
$$\label{ADB-tau-def}
   \t_n(r) \Def \inf\{t\ge0\colon\, \max_{1\le i\le n} \bbb_{in}^{-1}|S_i(W(t)) - S_i(p(t))| \ge 3\e_n(r)\},
$$
and set $\t_n(r,t) := \min\{t,\t_n(r)\}$.  Then, using \Ref{ADB-C-diff} and~\Ref{ADB-E-diff} as for~\Ref{ADB-P-def}, we have, for $s \le \t_n(r)$, 
\eqa
   F(s,W(s),X(s),J(s)) 
    &\le& \sn a_i L_i (1 - J_i(s))\Blb n^{-1}\sjn a_j s_{ji} J_j(s) + \bbb_{in}\e_n(r) \Brb \non\\
    &\le& A Z(s) + n\e_n(r)\bbb  \label{ADB-Z-mean-growth}
\ena
and
\eqa
  G(s,W(s),X(s),J(s)) 
    &\le& \sn a_i^2 L_i (1 - J_i(s))\Blb n^{-1}\sjn a_j s_{ji} J_j(s) + \bbb_{in}\e_n(r) \Brb \non\\
    &\le& A_2 Z(s) + n\e_n(r)\bbb_2. \label{ADB-Z-variance-growth}
\ena
It thus follows from~\Ref{ADB-Z-mean-growth} and the optional sampling theorem that
\eqs
   e^{-At}\ex Z(\t_n(r,t)) &=& \ex \Blb \int_0^{\t_n(r,t)} e^{-As}\{F(s,W(s),X(s),J(s)) - AZ(s)\}\,ds \Brb \\
           &\le& \int_0^t e^{-As} n\e_n(r)\bbb\,ds \Eq A^{-1}n\e_n(r)\bbb (1 - e^{-At}),
\ens
and hence that
\eq\label{ADB-Z-mean-cts}
     \ex Z(\t_n(r,t)) \Le A^{-1}n\e_n(r)\bbb (e^{At} - 1).
\en
Then, by a similar argument,
\eqs
   e^{-At} Z(\t_n(r,t))  &\le& \int_0^t e^{-As} n\e_n(r)\bbb\,ds + |M(\t_n(r,t))|,
\ens
giving
\[
    \pr[Z(\t_n(r,T)) > 2A^{-1}n\e_n(r)\bbb e^{At}] 
       \Le \pr[|M(\t_n(r,T))| > A^{-1}n\e_n(r)\bbb].
\]
The process $ M^{2} - \langle M \rangle $ is a martingale. Applying the optional sampling theorem again with \Ref{ADB-QV}, \Ref{ADB-Z-variance-growth} and~\Ref{ADB-Z-mean-cts} gives 
\eqs
    \var\{M(\t_n(r,T))\} &=& \ex \{\langle M \rangle_{\t_n(r,T)} \}
               \Le \int_0^T e^{-2As}\{A_2 \ex Z(\t_n(r,s)) + n\e_n(r)\bbb_2\}\,ds \\
    &\le& n\e_n(r) \frac{2A_2\bbb + A\bbb_2}{2A^2},
\ens
so that, by Chebyshev's inequality,
\[
   \pr[|M(\t_n(r,T))| > A^{-1}n\e_n(r)\bbb] 
              \Le \frac1{n\e_n(r)}  \frac{2A_2\bbb + A\bbb_2}{\bbb^2}.
\]
Since $\pr[\t_n(r,T) < T] \le 2(T+1)n^{-1}$ by~\Ref{ADB-S-control}, it follows that
\eq\label{ADB-Z-key-bnd}
   \pr[Z(T) > 2A^{-1}n\e_n(r)\bbb e^{AT}] \Le 3(T+1)n^{-1} + \frac1{n\e_n(r)}  \frac{2A_2\bbb + A\bbb_2}{\bbb^2}.
\en
The theorem is now proved from \Ref{ADB-Z-mean-cts}, \Ref{ADB-S-control} and~\Ref{ADB-Z-key-bnd}, in the same way as Theorems \ref{Thm1} and~\ref{ADB-refined-Thm} were completed. 
\end{proof}

\section{Discussion}\label{discussion}

The theorems proved in Sections \ref{discrete} and~\ref{continuous} give explicitly computable measures of the differences between the predictions of a number of stochastic metapopulation models and their deterministic counterparts.  No assumptions about asymptotic behaviour as the number~$n$ of patches tends to infinity are needed.  However, in order to get an idea about when the approximations are good, it is useful to think in terms of asymptotics.

The precision of the approximation of $\bX\{B\}$ by $\bp\{B\}$ depends on the time interval~$T$ through the factor~$e^{AT}$, and, as already discussed, it is thus important for good approximation that the product~$AT$ should not be large. The other key factor is $\bbb/(A\ba)$.  Taking the case when the~$L_i$ are all equal, the ratio~$\bbb/\ba$ represents an average of the quantities~$\bbb_{in}$.
Now, if the probabilities $\pr[W_j(t) = 1]$ are bounded away from $0$ and~$1$, the `signal to noise' ratio $\sqrt{\var(S_i(W))}/\ex S_i(W)$ is given by
\eqs
   \Blb \sjn p_j(1-p_j) \{ n^{-1} a_j s_{ji}\}^2\Brb^{1/2} \Big/ \Blb n^{-1}\sum_{l=1}^n p_l a_l s_{li}\Brb
    &\asymp& n^{-1/2}\bbb_{in}\Big/ \Blb n^{-1}\sum_{l=1}^n  a_l s_{li}\Brb. 
\ens
If the values of $n^{-1}\sum_{l=1}^n  a_l s_{li}$ are all of size comparable to their maximum~$A$, it follows that $n^{-1/2}\bbb/(A\ba)$ represents an average of these `signal to noise' ratios, and its being small reflects situations in which the quantities $S_i(W)$ do not fluctuate much, as is the key to the approximation of $\bW$ by~$\bp$.  In Theorems \ref{ADB-refined-Thm} and~\ref{ADB-Levins-Thm}, the precision is principally expressed in terms of $\e_n(r)\bbb/(A\ba)$, which is asymptotically larger than $n^{-1/2}\bbb/(A\ba)$ only by the factor $\sqrt{r\log n}$.  Thus, the two theorems attain an almost optimal asymptotic precision.

In practical terms, the `signal to noise' ratio of $S_i(W)$ is small when the influence on patch~$i$ 
is made up of contributions from a large number of patches.  If this is not the case, our theorems 
do not indicate that the approximation of $\bX$ by~$\bp$ need be good, even for large~$n$.  
The example of the contact process on the sites $\{1,2,\ldots,n\}$ \citep{DL:88} shows that the 
approximation may indeed be very bad.  In this model, a Levins model~\Ref{ADB-Eq4}, $s_{ij} = 1$ if $|i-j| = 1$,  
and $s_{1n}=1$ also; otherwise, $s_{ij}=0$. All the~$a_i$ are equal, $C_i(x) = \lambda(x_{i-1} + x_{i+1})$, 
with $x_0 := x_n$ and $x_{n+1} := x_1$, and $E_i(x) = 1$.  The quantity $n^{-1/2}\bbb/(A\ba)$ takes the value  
$1/\sqrt2$, which does not become small as~$n$ increases.  When $\lambda > 1/2$, the differential 
equations~\Ref{Eq3} have extinction ($x_i = 0$ for all~$i$) as an unstable equilibrium, and an 
equilibrium with $x_i = 1 - 1/2\lambda$ for all~$i$ which is locally stable.  On the other hand, 
the stochastic process~\Ref{ADB-Eq4} becomes extinct in time of order $O(\log n)$, the same order 
as for the (pure death) process with $\l=0$, whenever $\l < \l_c$ \citep[Theorem 1]{DL:88}, 
where~$\l_c$ is the critical value for the same process on the whole of~$\Z$.  Since $3/2 < \l_c < 2$, 
the behaviour of the stochastic process~\Ref{ADB-Eq4} is completely different from that of its 
deterministic counterpart~\Ref{Eq3} when $1/2 < \l < 3/2$.  
 
In the context of habitat fragmentation, the condition that $ A $ remains bounded as $ n $ increases 
is natural. First, we note that $ s_{ji} \leq 1 $ for any of the forms considered in \citet{MH:98} 
and \citet{Moilanen:04}.  Comparing equation (\ref{IFM:Connect}) with the original formulation 
of \citet{Hanski:94}, we see that the area of patch $ i $ is given by $ n^{-1} a_{i} $. If we 
consider that the original habitable area was finite and that the habitat patches were formed 
by fragmentation of this area, then this implies that $ \bar{a} $ remains bounded. Assuming 
the $ L_{i} $ are bounded, $ A $ will also remain bounded. The other factor controlling the 
accuracy of the approximation, $\bbb/(A\ba)$, is also constrained in the habitat fragmentation 
context. If $ L_{i} \leq L $ for all  $i $, then 
$ H \leq L \bar{a} (n^{-1} \sum_{j=1}^{n} a_{j}^{2})^{1/2}$. If the area of the largest patch 
is bounded by $ \delta_{n} $, then 
$ n^{-1} \sum_{j=1}^{n} a_{j}^{2} \leq n \delta_{n} (\bar{a}  + 2\delta_{n}) $. Hence, 
$n^{-1/2}\bbb/(A\ba)=O(\delta_{n}^{1/2}) $. Therefore, the deterministic process provides a good 
approximation provided $ \max_{i} n^{-1} a_{i} \rightarrow 0 $. In other words, the area of the 
largest patch should be small for the approximation to be good. If one of more patches were to 
remain large, then we would expect the approximation to be poor. An example of the type of behaviour 
to be expected in this case is given in \citet{MP:12}.

Another natural asymptotic framework is that in which the area under consideration is taken to be 
progressively larger, encompassing ever more patches, but without the overall patch structure 
changing. In such circumstances, the numbers of patches influencing a given patch would not 
typically change with~$n$, and hence no improvement in precision is to be expected as~$n$ increases.  
The contact process discussed above is an example of this. 

\citet{OC:06} studied a similar problem, but allowed the number of patches influencing a given patch to 
increase by scaling the $ s_{ij}$. Their aim was to analyse how the stochastic and deterministic spatial 
models deviate from the simpler Levins model. In the simplest case, \citet{OC:06} assumed that the location 
of patches followed a Poisson process on $ \mathbb{R}^{d} $. To bring our analysis closer to theirs, assume 
that, in a metapopulation of $ n $ patches, the patch locations~$z_i$ are independent and uniformly distributed on 
$ [0,n^{1/d}]^d$. As $ n \rightarrow \infty $, the distribution of patches on any fixed finite region converges to 
that of a Poisson process.  With a constant rate of local extinction and colonisation 
function $ f_{C,i}(x) = x $ for all $ i $, it follows that $ L_{i} = 1 $ for all $ i $. To simplify the calculations, we assume that all patch areas are the same, and that interaction
occurs with the same intensity between all close enough patches.  Explicitly,
following the standardization in \citet{Hanski:94}, we choose $n^{-1}a_i = 1$ for all~$i$, 
and assume that
\[
   s_{ij} \Eq (v(d)R^d)^{-1} \mathbb{I} \left(|z_{i} - z_{j}| \leq R \right),
\]
where $R=R_n$ controls the range of influence of a patch, and $v(d)$ denotes the volume of the unit ball
$B_1(0)$ in~$\re^d$. \citet{OC:06} proposed expansions for the equilibrium level 
of the metapopulation that became more accurate in the limit as $R \rightarrow \infty $. To apply 
Theorem~\ref{ADB-Levins-Thm} to this setting, we need to 
calculate parameters such as $ \ba, A $ and $ H $.

It is immediate from our definitions that $\ba=n$, and that we can take $\th=1$ with $\ps(1) = 0$.  
The values of the remaining parameters depend on the positions
of the~$z_i$.  However, for each fixed~$i$, conditioning on the position~$z_i$, the sum
$\sum_{j\ne i} \mathbb{I} (|z_{i}-z_{j}| \leq R)$ has the binomial distribution $\Bi(n-1,p_{ni})$,
with $p_{ni} := n^{-1}|B_R(z_i) \cap [0,n^{1/d}]^d|$.  By the upper Chernoff inequality, it follows that,
for any $\e > 0$, if $ R^{d}/\log n \rightarrow \infty $, then
\begin{eqnarray*}
\lefteqn{\pr\left( \max_{1\le i\le n}\sum_{j\ne i} \mathbb{I} (|z_{i}-z_{j}| \leq R) \ge (1+\e)v(d)R^d \Br} \\
&  \Le  &    n\pr\left( \sum_{j\ne i} \mathbb{I} (|z_{i}-z_{j}| \leq R) \ge (1+\e)v(d)R^d \Br \ \to\ 0
\end{eqnarray*}
as $n\to\infty$.  If $R^d/n \to 0$, with probability tending to~$1$, one of the~$z_i$ is such that
$p_{ni} = n^{-1}v(d)R^d$, and it then follows also that
$$
    \pr\Bigg( \max_{1\le i\le n}\sum_{j\ne i} \mathbb{I} (|z_{i}-z_{j}| \leq R) \le (1-\e)v(d)R^d\Bigg)
      \ \to\ 0.
$$
Hence, if $\log n \ll R^d \ll n$, $A \in [1-\e,1+\e]$ with high probability, and we also have
$H = O(n^{3/2}R^{-d/2})$. Applying the first
 part of Theorem~\ref{ADB-Levins-Thm}, we see that $ \bX $ and $ \bp $ are close with high probability 
on the interval $[0,T]$, for any fixed~$T$,
if $ n^\d \ll R^d \ll n$, for any $0 < \d < 1$. 
For the second part of the theorem, we have $H/A\ba = O(\sqrt{n/R^d})$ and $\ps(1)=0$ as above, 
and, in addition, $(A_2H + AH_2)/H^2 = O(\sqrt{R^d/n})$. This gives an approximation error
of order $O(\sqrt{\log n/R^d})$ over any fixed interval $[0,T]$, 
uniformly for all sets in any class with finite VC dimension, except on an event of 
probability $O(n^{-1})$, thus sharpening the bound on the error probability, while
broadening the range of~$R$ to $\log n \ll R^d \ll n$.
The same result is true also if $R^d \asymp n$, though the value of~$A$ may be
different.

However, although we have close agreement between 
deterministic and stochastic models using a scaling similar to \citet{OC:06}, our results do not allow 
us to make similar statements. A crucial part of their analysis involved examining the behaviour of 
the equilibrium of deterministic model under the scaling of the colonisation kernel. Examining the 
behaviour of the deterministic model under this scaling for finite metapopulations would be an 
interesting problem for future study.

Distance between the measures $\bX$ and~$\bp$ has been described by bounding the differences between the probabilities that they assign to the sets in a class~$\BB$ of finite VC dimension. The assumption of a finite VC dimension reduces the number of integrals that need to be compared to a finite number that grows like a polynomial in $ n$.  However, one could look instead at other distances for which the number of integrals that needs to be compared grows faster than a polynomial in $n $, at the cost of losing some precision.  For instance, if such a distance requires $\exp\{\a n^\h\}$ integrals to be compared, with $\a > 0$ and $0 < \h < 1$, then this number is heavily dominated by the failure probability $\exp\{-n\e^2\}$ that follows, as for Corollary~\ref{IPA:Lem1}, from Lemma~\ref{ADB-IPA:Lem0}, if~$\e = \e_n$ is chosen to be $b n^{-(1-\h)/2}$ with $b^2 = 2\a$.  Thus the approximation of $\bW$ by~$\bp$ to this accuracy can be achieved for sufficiently many time points, with negligible probability of failure, and the approximation of $\bX$ by~$\bW$ is proved as before.  One example would be to use the Wasserstein distance between measures, assuming that the values $(z_{i},a_{i})$ come from a bounded subset $\ww_0$ of~$\ww$.  For instance, if~$\ww$ has dimension $d+1$, then the number of functions with
Lipschitz constant at most~$k_n$ needed to approximate any such function on~$\ww_0$ to within~$\e_n$ in supremum distance is of order $O(\exp\{\a(k_n/\e_n)^{d+1}\})$ for some $\a > 0$ \citep[section 5.1.1]{Lorentz:66} and taking $\e_n = b(k_n^{d+1}/n)^{1/(d+3)}$ with $b^{(d+3)} = 2\a$ would result in the difference between the expectations of any
Lipschitz functions with constant less than~$k_n$ being at most of order~$\e_n$, with negligible failure probability, if $k_n \le n^\h$ with $\h(d+1) < 1$.  For Wasserstein distance, we choose $k_n = 1$, and the distance is of order $O(n^{-1/(d+3)})$.

\section*{Acknowledgements}
We would like to thank the two referees for their helpful comments and suggestions.


\begin{thebibliography}{00}

\bibitem[Alonso and McKane(2002)]{AM:02} Alonso D, McKane A (2002) 
Extinction dynamics in mainland-island metapopulations: An $ N$-patch stochastic model, 
Bull Math Biol, 64, 913--958

\bibitem[Barbour and Luczak(2008)]{BL:08} Barbour AD, Luczak MJ (2008) 
Laws of large numbers for epidemic models with countably many types, 
Ann Appl Probab, 18, 2208--2238 


\bibitem[Brown and Kodric-Brown(1977)]{BKB:77} Brown JH, Kodric-Brown A (1977) Turnover rates in insular biogeography: effect of immigration on extinction, Ecology, 58, 445-449

\bibitem[Burke and Rosenblatt(1958)]{BR:58} Burke CJ, Rosenblatt M (1958) A Markov function of a Markov chain, Ann Math Stat, 29, 1112-1122

\bibitem[Darling and Norris(2008)]{DN:08} Darling RWR, Norris JR (2008) 
Differential equation approximations for Markov chains, 
Probab Surv, 5, 37--79

\bibitem[Day and Possingham(1995)]{DP:95} Day J, Possingham HP (1995) A stochastic metapopulation model with variability in patch size and position, Theor Popul Biol, 48, 333--360

\bibitem[Devroye and Lugosi(2001)]{DL:01} Devroye L, Lugosi G (2001) 
Combinatorial methods in density estimation, Springer, New York 

\bibitem[Dudley(1979)]{Dudley:79} Dudley RM (1979) 
Balls in $ R^{k} $ do not cut all subsets of $ k+2 $ points, 
Adv in Math, 31,  306--308

\bibitem[Durrett and Liu(1988)]{DL:88} Durrett R, Liu X-F (1988)
The Contact Process on a Finite Set,
 Ann Probab, 16,  1158--1173


\bibitem[Hanski(1994)]{Hanski:94} Hanski I (1994) 
A practical model of metapopulation dynamics, 
J Anim Ecol, 63, 151--162 

\bibitem[Hanski and Gyllenberg(1997)]{HG:97} Hanski I, Gyllenberg M (1997) 
Uniting two general patterns in the distribution of species, 
Science, 275, 397--400 

\bibitem[Hanski et al(1996)]{HMG:96} Hanski I, Moilanen A, Gyllenberg M (1996) Minimum viable
metapopulation size, Am Nat, 147, 527--541

\bibitem[Hanski and Ovaskainen(2003)]{HO:03} Hanski I, Ovaskainen O (2003) 
Metapopulation theory for fragmented landscapes, 
Theor Popul Biol, 64, 119--127

\bibitem[Harris(1963)]{Harris:63} Harris TE (1963)
The theory of branching processes, Springer, Berlin.

\bibitem[Lorentz(1966)]{Lorentz:66} Lorentz GG (1966) Metric entropy and approximation, Bull Amer Math Soc, 72, 903--937 

\bibitem[McDiarmid(1998)]{McDiarmid:98} McDiarmid C (1998) 
Concentration. In: Habib M, McDiarmid C, Ramirez-Alfonsin J, Reed B
(eds) Probabilistic Methods for Algorithmic Discrete Mathematics, 
Algorithms and Combinatorics, 16, Springer, Berlin, pp.  195--248

\bibitem[McVinish and Pollett(2012)]{MP:12} McVinish, R, Pollett, PK (2012) The limiting behaviour of a mainland-island metapopulation, J Math Bio, 64, 775--801

\bibitem[Moilanen(2004)]{Moilanen:04} Moilanen A, (2004) SPOMSIM: software for stochastic patch occupancy models 
of metapopulation dynamics, Ecol Model, 179, 533--550 

\bibitem[Moilanen and Hanski(1998)]{MH:98} Moilanen A, Hanski I (1998) Metapopulation dynamics: effects of habitat quality and landscape structure, Ecology, 79, 2503--2515

\bibitem[Moilanen and Nieminen(2002)]{MN:02} Moilanen A, Nieminen M (2002) Simple connectivity measures in spatial ecology, Ecology, 83, 1131--1145

\bibitem[Ovaskainen and Cornell(2006)]{OC:06} Ovaskainen O, Cornell SJ (2006) Asymptotically exact analysis of stochastic metapopulation dynamics with explicit spatial structure, Theor Popul Biol, 69, 13--33

\bibitem[Ovaskainen and Hanski(2001)]{OH:01} Ovaskainen O, Hanski I (2001) 
Spatially structured metapopulation models: Global and local assessment of metapopulation capacity, 
Theor Popul Biol, 60, 281--302

\bibitem[Ovaskainen and Hanski(2002)]{OH:02} Ovaskainen O, Hanski I (2002) 
Transient dynamics in metapopulation response to perturbation, 
Theor Popul Biol, 61, 285--295

\bibitem[Sauer(1972)]{Sauer:72} Sauer N (1972) 
On the density of families of sets, 
J Combinatorial Theory Ser A, 13, 145--147

\bibitem[Shaw(1994)]{Shaw:94} Shaw MW (1994) Simulation of population expansion and spatial
pattern when individual dispersal distributions do not decline
exponentially with distance, Proc R Soc London B, 259, 243-248

\bibitem[Vapnik and Chervonenkis(1971)]{VC:71} Vapnik VN, Chervonenkis AYa (1971) 
On the uniform convergence of relative frequencies of events to their probabilities, 
Theory Probab Appl, 16, 264--280 

\end{thebibliography}
\end{document}